\documentclass{article}
\usepackage{enumerate,amsmath,amssymb,mathabx,amsthm}
\usepackage{algorithm,mathrsfs,graphicx,algorithmicx,authblk}
\usepackage{geometry}
\usepackage[nottoc]{tocbibind}

\newtheorem{theorem}{Theorem}[section]
\newtheorem{lemma}[theorem]{Lemma}
\newtheorem{corollary}[theorem]{Corollary}
\newtheorem{remark}{Remark}[section]

\newtheorem{proposition}{Proposition}[section]
\newtheorem{assumption}{Assumption}
\usepackage[colorlinks,linkcolor=blue,citecolor=red]{hyperref}
\usepackage[noend]{algpseudocode}

\counterwithin{equation}{section}
\counterwithin{figure}{section}
\counterwithin{table}{section}
\geometry{a4paper}

\setlength{\parskip}{10pt}
\setlength{\abovedisplayskip}{1ex}
\setlength{\belowdisplayskip}{1ex}

\title{On the discrete Poincar\'e inequality for B-schemes of 1D Fokker-Planck equations in full space}

\author[a]{Lei Li\thanks{E-mail:leili2010@sjtu.edu.cn}}
\author[b]{Jian-Guo Liu\thanks{E-mail:jliu@math.duke.edu}}
\author[c]{Zhen Wang\thanks{E-mail:wang-zhen@sjtu.edu.cn}}
\affil[a]{School of Mathematical Sciences, Institute of Natural Sciences, MOE-LSC, Shanghai Jiao Tong University, Shanghai, 200240, P.R.China.}
\affil[b]{Department of Mathematics, Department of Physics, Duke University, Durham, NC 27708, USA.}
\affil[c]{School of Mathematical Sciences, Shanghai Jiao Tong University, Shanghai, 200240, P.R.China.}

\date{}

\begin{document}

\maketitle

\begin{abstract}
In this paper, we propose two approaches to derive the discrete Poincar\'e inequality for the B-schemes, a family of finite volume discretization schemes, for the one-dimensional Fokker-Planck equation in full space. We study the properties of the spatially discretized Fokker-Planck equation in the viewpoint of a continuous-time Markov chain. The first approach is based on Gamma-calculus, through which we show that the Bakry-\'Emery criterion still holds in the discrete setting. The second approach employs the Lyapunov function method, allowing us to extend a local discrete Poincar\'e inequality to the full space. The assumptions required for both approaches are roughly comparable with some minor differences. These methods have the potential to be extended to higher dimensions. As a result, we obtain exponential convergence to equilibrium for the discrete schemes by applying the discrete Poincar\'e inequality.
\end{abstract}

\section{Introduction}
The Fokker-Planck equation is a fundamental equation in applied mathematics and theoretical physics, with wide-ranging applications in chemistry, biology, and finance \cite{ceccato2018remarks,karcher2006coarse,furioli2017fokker}. However, obtaining an exact solution is often difficult, which motivates extensive research into numerical simulations.
When designing numerical schemes for the Fokker-Planck equation, it is essential to ensure that discretizations preserve key physical properties, including accurate equilibrium recovery \cite{pareschi2017residual,li2020large}. It is well known that the analysis of long-time behavior for Fokker-Planck-type equations is closely connected to Poincar\'e-type inequalities \cite{arnold2001convex}. In this work, we derive a discrete Poincar\'e inequality in full space for a family of numerical schemes, thereby establishing exponential convergence to equilibrium.

In the continuous setting, Poincar\'e-type inequalities are instrumental in the analysis of partial differential equations. They are commonly used in existence and regularity theories, as well as in studying convergence to equilibrium for kinetic equations. There is also substantial literature on discrete Poincar\'e-type inequalities, which are used to ensure favorable long-time behavior in numerical simulations.
In \cite{bessemoulin2015discrete}, several Poincar\'e-Sobolev inequalities for the Lebesgue measure are extended to the discrete case via continuous embedding into the space of functions of bounded variation. Building on this, \cite{cances2020large} derives a discrete Poincar\'e inequality for the stationary measure $\pi^h$ within the framework of two-point flux approximation and discrete duality finite volume schemes. In \cite{chainais2022long}, discrete Poincar\'e-type inequalities are established for hybrid finite volume schemes and used to demonstrate exponential convergence to discrete steady states.
However, all these results are confined to bounded domains. The discrete Poincar\'e inequality in full space remains largely unexplored and requires further investigation.

It is usually challenging to establish a discrete Poincar\'e inequality in the full space. There is limited literature on this subject
\cite{dujardin2020coercivity,li2020large}, and both works analyze discrete schemes in the viewpoint of Markov chains, a widely adopted approach in the literature. In \cite{delarue2011probabilistic}, the upwind scheme for transport equations is associated with a discrete-time Markov chain, and an $\mathcal{O}(h^{1/2})$ error is obtained due to fluctuations of the chain.
In \cite{dujardin2020coercivity}, a special spatial discretization is proposed for the one-dimensional Fokker-Planck equation with potential $u(x) = x^2/2$, and the scheme is analyzed as a continuous-time Markov chain to derive exponential convergence to equilibrium via a discrete Poincar\'e inequality in the full space.
Our work differs from \cite{dujardin2020coercivity} in that we consider a more general potential and adopt a different discretization of \eqref{FPE}.
In \cite{li2020large}, a one-dimensional discrete Poincar\'e inequality is established through a generalized Hardy inequality, and the long-time behavior of discrete schemes is studied from the Markov chain viewpoint. However, their result requires truncation of coefficients near infinity and thus cannot be directly applied to the particularly important Scharfetter-Gummel scheme \cite{scharfetter1969large}.
In this work, we regard the discrete Fokker-Planck equation as the forward equation of a continuous-time Markov chain and study its long-time behavior using the discrete Poincar\'e inequality.

We focus on the one-dimensional Fokker-Planck equation in this study. Consider the following stochastic differential equation in the It\^o sense \cite{oksendal2003stochastic},
\begin{equation}
    dX_t = -\partial_x u(X_t)\,dt + \sqrt{2}\,dW_t,
\end{equation}
where $u(x)$ is a smooth potential and $W$ is the standard Wiener process defined on a probability space $(\Omega,\mathcal{F},\mathbb{P})$. And $\partial_x u(x) = u'(x)$ in the one-dimensional setting. The well-posedness of this equation is guaranteed under the following drift assumption \cite[Theorem 2.3.5]{mao2007stochastic}:

\begin{assumption}\label{A0}
    The potential $u$ is smooth, and there exist constants $a > 0$, $M > 0$ such that $x\partial_x u(x) \geq a|x|^2$ for all $|x| > M$.
\end{assumption}

Let $\rho_t$ be the probability density of $X_t$ at time $t$. Then $\rho_t$
satisfies the Fokker-Planck equation, also known as the forward equation:
\begin{equation}\label{FPE}
    \partial_t \rho = \partial_x(\rho \partial_xu + \partial_x \rho) =: \mathcal{L}^* \rho,
\end{equation}
Under Assumption \ref{A0}, there exists a unique invariant solution $\pi(x) \propto e^{-u(x)}$ to \eqref{FPE}. Since
\begin{equation}
    \pi \partial_xu + \partial_x\pi = 0,
\end{equation}
the flux in \eqref{FPE} vanishes for the invariant solution, implying that the detailed balance condition holds and the process is reversible \cite{weinan2021applied}. Let $q_t = \rho_t / \pi$, and suppose $\rho_t$ satisfies \eqref{FPE}. Then we obtain the backward equation:
\begin{equation}\label{BWE}
    \partial_t q = -\partial_xu \partial_x q + \partial_{xx} q =: \mathcal{L} q.
\end{equation}
It can be verified that $\mathcal{L}$ is the adjoint operator of $\mathcal{L}^*$ in $L^2(\mathbb{R})$.

In this work we discuss the spatial discretization of \eqref{FPE} by $B$-scheme with step size $h$ and grid points $x_i=ih$ for $i\in\mathbb{Z}$,
\begin{equation}\label{DFPE}
    \frac{d}{dt}\rho_i=\left(\alpha_{i-1}\rho_{i-1}-\beta_i\rho_i\right)-\left(\alpha_i\rho_i-\beta_{i+1}\rho_{i+1}\right)=:\mathcal{L}_h^*\rho_i,
\end{equation}
where the coefficients given by
\begin{equation}\label{C}
    \alpha_i:=\frac{1}{h^2}B\left(\ln{\frac{\pi(x_i)}{\pi(x_{i+1})}}\right),\quad \beta_i:=\frac{1}{h^2}B\left(\ln{\frac{\pi(x_i)}{\pi(x_{i-1})}}\right).
\end{equation}
Set $u_i=u(x_i)$, then $\alpha_i=h^{-2}B(u_{i+1}-u_i)$ and $\beta_i=h^{-2}B(u_{i-1}-u_i)$. Here \eqref{DFPE} can be regarded as the forward equation of a continuous-time Markov chain, where $\alpha_i$ is the transition rate from site $i-1$ to $i$, while $\beta_i$ the rate from site $i+1$ to $i$. Hence this scheme can be simulated by some Monte Carlo methods \cite{rubinstein2016simulation}. 
The conditions we put on $B(s)$ are given as follows.
\begin{assumption}\label{ass:B}
The function $B\in C^1(\mathbb{R})$ and satisfies the following properties:
\begin{itemize}
    \item The function $B(s)$ is Lipschitz continuous;
    \item $B(0)=1$ and $B(s)>0$ for all $s\in\mathbb{R}$;
    \item $\ln{B(-s)}-\ln{B(s)}=s$ for all $s\in\mathbb{R}$;
    \item The function $B(s)$ is monotonically decreasing.
\end{itemize}
\end{assumption}
The first three conditions are relatively standard. Note that the third one also appears in \cite{heida2021consistency}, but is different from the original one \cite{chainais2011finite}, which assumes $B(-s)-B(s)=s$ to simulate $\partial_xu$ in the flux $\rho\partial_xu+\partial_x\rho$. Since $\ln{(1+s)}=s+\mathcal{O}(s^2)$, the modified condition does not lose the consistency, and the benefit is clear. It can be verified the detailed balance condition holds
\begin{equation}\label{DB}
    \alpha_{i-1}\pi(x_{i-1})-\beta_i\pi(x_i)=0,
\end{equation}
then $\{\pi(x_i)\}_{i\in\mathbb{Z}}$ is a stationary solution of \eqref{DFPE} \cite{kelly2011reversibility}. We can define
\begin{equation}\label{pi}
    \pi_i:=\frac{\pi(x_i)}{\sum_{i\in\mathbb{Z}}{\pi(x_i)}},
\end{equation}
and $\pi^h:=\{\pi_i\}_{i\in\mathbb{Z}}$ is the stationary distribution of \eqref{DFPE}.
The conditions $\ln{B(-s)}-\ln{B(s)}=s$ combined with $B(0)=1$ imply that 
\[
B'(0)=-\frac{1}{2}.
\]
The last condition in Assumption \ref{ass:B} is an extra condition we impose. Since the drift and diffusion terms are treated simultaneously as the numerical flux, the monotonicity of $B(s)$ ensures that $\alpha_i$ becomes the primary contributor to the flux near $-\infty$, while $\beta_i$ dominates in the flux near $+\infty$. Consequently, the mass remains concentrated within a bounded domain and does not escape to infinity. Using the detailed balance condition \eqref{DB}, we obtain
\begin{equation*}
    \sum_{i\in\mathbb{Z}} \alpha_i \pi_i = \sum_{i=0}^{+\infty} \alpha_i \pi_i + \sum_{i=0}^{+\infty} \beta_{-i} \pi_{-i},
\end{equation*}
from which the following result follows, due to the boundedness of $\alpha_i$ as $i \to +\infty$ and of $\beta_i$ as $i \to -\infty$.

\begin{lemma}\label{L}
    Under Assumption \ref{A0} and \ref{ass:B}, it holds that
    \begin{equation}
        \sum_{i\in\mathbb{Z}} \left( \alpha_i + \beta_i \right) \pi_i < +\infty.
    \end{equation}
\end{lemma}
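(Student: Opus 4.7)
\textbf{Proof proposal for Lemma \ref{L}.}

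The plan is to use the detailed balance relation \eqref{DB} to reorganize $\sum_i\alpha_i\pi_i$ and $\sum_i\beta_i\pi_i$ so that every summand is controlled by $B$ evaluated on a \emph{nonnegative} argument, where monotonicity of $B$ gives a uniform bound. This will reduce the verification to the (obvious) fact that $\sum_i\pi_i=1$ plus a finite remainder.

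First I would analyze the sign of $u_{i+1}-u_i$ for $|i|$ large. By Assumption \ref{A0}, $xu'(x)\ge a|x|^2$ for $|x|>M$, so $u'(x)>0$ for $x>M$ and $u'(x)<0$ for $x<-M$. Hence there exists $N=N(M,h)$ such that $u_{i+1}\ge u_i$ for all $i\ge N$ and $u_{i-1}\ge u_i$ for all $i\le -N$. Combined with the monotonicity of $B$ and $B(0)=1$, this yields
\begin{equation*}
\alpha_i=h^{-2}B(u_{i+1}-u_i)\le h^{-2}\quad(i\ge N),\qquad
\beta_i=h^{-2}B(u_{i-1}-u_i)\le h^{-2}\quad(i\le -N).
\end{equation*}

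Next I would split the sum and use detailed balance to swap $\alpha$'s and $\beta$'s on the ``bad'' sides, as suggested by the identity stated in the excerpt. Writing
\begin{equation*}
\sum_{i\in\mathbb Z}\alpha_i\pi_i
=\sum_{i\ge 0}\alpha_i\pi_i+\sum_{i\le -1}\alpha_i\pi_i,
\end{equation*}
and applying \eqref{DB} in the form $\alpha_i\pi_i=\beta_{i+1}\pi_{i+1}$ to the second sum turns it into $\sum_{i\ge 0}\beta_{-i}\pi_{-i}$. An analogous manipulation handles $\sum_i\beta_i\pi_i$. After these rewrites, the only coefficients appearing are $\alpha_i$ with $i\ge 0$ and $\beta_{-i}$ with $i\ge 0$ (and symmetrically for the $\beta$-sum).

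Finally, for each of these sums I would separate the finitely many indices with $|i|<N$ from the tail. The tail is bounded by $h^{-2}\sum_i\pi_i=h^{-2}$ thanks to the pointwise estimates above, while the finite portion is trivially finite since $B$ is continuous (hence bounded on the relevant compact set) and $\pi_i\le 1$. Summing the two contributions gives $\sum_i(\alpha_i+\beta_i)\pi_i<+\infty$. The only step with any subtlety is keeping track of the index shifts when applying \eqref{DB}; once that bookkeeping is done, the estimate is essentially immediate from monotonicity of $B$ and normalization of $\pi^h$.
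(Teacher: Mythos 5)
Your proposal is correct and follows essentially the same route as the paper: the paper also uses detailed balance to rewrite $\sum_i\alpha_i\pi_i$ as $\sum_{i\ge0}\alpha_i\pi_i+\sum_{i\ge0}\beta_{-i}\pi_{-i}$ and then invokes the boundedness of $\alpha_i$ as $i\to+\infty$ and of $\beta_i$ as $i\to-\infty$ (which, as you note, comes from the sign of $u_{i\pm1}-u_i$ at large $|i|$ under Assumption \ref{A0} together with the monotonicity of $B$ and $B(0)=1$), plus normalization of $\pi^h$. You simply spell out the index bookkeeping and the splitting into tail and finite part that the paper leaves implicit.
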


\begin{remark}
    The classical Scharfetter-Gummel scheme is recovered by taking $B(s):=s/(e^s-1)$. It can be verified that the function $B(s)$ is monotonically decreasing and convex in this case.
\end{remark}

Define $\langle f^h, g^h \rangle := \sum_{i\in\mathbb{Z}} f_i g_i$, and let $\ell^2$ denote the Hilbert space $\mathbb{R}^\mathbb{Z}$ equipped with the inner product $\langle \cdot, \cdot \rangle$. Furthermore, in this work we primarily consider the following Banach spaces:
\begin{align*}
    &\ell^p := \left\{ f^h \in \mathbb{R}^\mathbb{Z} \ \big| \ \| f^h \|_{\ell^p} := \left( \sum\nolimits_{i\in\mathbb{Z}} |f_i|^p \right)^{1/p} < +\infty \right\}, \quad 1 \leq p < +\infty,\\
    &\ell^p(\pi^h) := \left\{ f^h \in \mathbb{R}^\mathbb{Z} \ \big| \ \| f^h \|_{\ell^p(\pi^h)} := \left( \sum\nolimits_{i\in\mathbb{Z}} |f_i|^p \pi_i \right)^{1/p} < +\infty \right\}, \quad 1 \leq p < +\infty,\\
    &\ell^\infty = \ell^\infty(\pi^h) := \left\{ f^h \in \mathbb{R}^\mathbb{Z} \ \big| \ \| f^h \|_{\ell^\infty} := \sup\nolimits_{i\in\mathbb{Z}} |f_i| < +\infty \right\}.
\end{align*}
Let $q_i(t) := \rho_i(t)/\pi_i$, where $\rho_i(t)$ satisfies \eqref{DFPE}. Since the detailed balance condition \eqref{DB} holds, $q_i(t)$ satisfies the discrete backward equation
\begin{equation}\label{DBWE}
    \frac{d}{dt} q_i = \alpha_i (q_{i+1} - q_i) - \beta_i (q_i - q_{i-1}) =: \mathcal{L}_h q_i.
\end{equation}
It can be verified that $\mathcal{L}_h$ is the adjoint operator of $\mathcal{L}^*_h$ in $\ell^2$. Moreover, the operator $\mathcal{L}_h$ is symmetric with respect to $\pi^h$ in the sense that
\begin{equation*}
    \left\langle g^h \mathcal{L}_h f^h, \pi^h \right\rangle = \left\langle f^h \mathcal{L}_h g^h, \pi^h \right\rangle, \quad \forall f^h, g^h \in \ell^2(\pi^h).
\end{equation*}

In \cite{heida2021consistency}, the consistency of modified $B$-schemes is studied from the perspective of weighted Stolarsky means \cite{stolarsky1975generalizations}, and convergence of order $\mathcal{O}(h^2)$ is obtained in the energy norm for a bounded domain with uniform grids. In this work, we are concerned with the long-time behavior of the discrete scheme in the full space. Since, by \eqref{pi}, the stationary solution of the discrete equation \eqref{DFPE} coincides with that of the continuous equation \eqref{FPE}, we can establish stability and convergence of the long-time behavior once the discrete Poincar\'e inequality and the ergodicity of \eqref{DFPE} are proved.

We define the following difference operators:
\begin{equation*}
    \Delta^+ f_i := f_{i+1} - f_i, \qquad \Delta^- f_i := f_i - f_{i-1},
\end{equation*}
and
\begin{equation*}
    \left( \left| Df^h \right| \right)_i := \left( \frac{1}{2} \alpha_i (\Delta^+ f_i)^2 + \frac{1}{2} \beta_i (\Delta^- f_i)^2 \right)^{\frac{1}{2}}.
\end{equation*}
Our goal in this work is to establish discrete Poincar\'e inequalities in the full space for the stationary distribution $\pi^h$ given in \eqref{pi}, in the sense that
\begin{equation}\label{PI}
    \sum_{i \in \mathbb{Z}} \left( f_i - \sum_{j \in \mathbb{Z}} f_j \pi_j \right)^2 \pi_i \leq \frac{1}{\kappa} \left\| Df^h \right\|_{\ell^2(\pi^h)},
\end{equation}
where $\kappa$ is a positive constant independent of $h$. We mainly consider $f^h$ in the following function space:
\begin{equation}
    f^h \in \mathcal{H}^1(\pi^h) := \left\{ g^h \in \mathbb{R}^{\mathbb{Z}} \ \big| \ \left\| g^h \right\|^2_{\mathcal{H}^1(\pi^h)} := \left\| g^h \right\|^2_{\ell^2(\pi^h)} + \left\| Dg^h \right\|^2_{\ell^2(\pi^h)} < +\infty \right\}.
\end{equation}

We define the \textit{carr\'e du champ} operator \cite{baudoin2017bakry} as
\begin{equation*}
    \Gamma^h(f_i, g_i) := \frac{1}{2} \left( \mathcal{L}_h(f_i g_i) - f_i \mathcal{L}_h g_i - g_i \mathcal{L}_h f_i \right),
\end{equation*}
and it can be verified that
\begin{equation}\label{G}
    \Gamma^h(f_i, g_i) = \frac{1}{2} \alpha_i \Delta^+ f_i \Delta^+ g_i + \frac{1}{2} \beta_i \Delta^- f_i \Delta^- g_i.
\end{equation}
Then the inequality \eqref{PI} can be equivalently rewritten in the following form
\begin{equation}
    \left\langle (f^h)^2, \pi^h \right\rangle - \left\langle f^h, \pi^h \right\rangle^2 \leq \frac{1}{\kappa} \left\langle \Gamma^h(f^h, f^h), \pi^h \right\rangle.
\end{equation}

\begin{remark}
   By Lemma \ref{L}, under Assumption \ref{A0} and \ref{ass:B}, we have $\ell^\infty \subset \mathcal{H}^1(\pi^h)$. Since $\ell^\infty$ is dense in $\mathcal{H}^1(\pi^h)$, the discrete Poincar'e inequality for $f \in \ell^\infty$ extends to all $f \in \mathcal{H}^1(\pi^h)$.
\end{remark}

Let $P_t$ be the semigroup generated by $\mathcal{L}_h$, and $P^*_t$ its dual. Viewing \eqref{DFPE} from the perspective of a jump process, in Section \ref{S1} we will recall some properties of continuous-time Markov chains and use them to establish the well-posedness of $P_t: \ell^\infty \to \ell^\infty$, and consequently for its dual $P^*_t: \ell^1 \to \ell^1$. After showing that $\mathcal{L}_h^*$ is the generator of $P_t^*$, we can establish the well-posedness of both \eqref{DFPE} and \eqref{DBWE}. In Section \ref{S1}, we will also show that $\|P_t f^h\|_{\ell^\infty} \leq \|f^h\|_{\ell^\infty}$ for all $t > 0$. Then, together with Lemma \ref{L} and the dominated convergence theorem, we obtain the following result.

\begin{proposition}\label{T}
    Under Assumption \ref{A0} and \ref{ass:B}, let $\varphi: \mathbb{R} \to \mathbb{R}$ be a smooth convex function. Then the following identity holds for all $f^h \in \ell^\infty$,
    \begin{equation}
        \frac{d}{dt} \left\langle \varphi(P_t f^h), \pi^h \right\rangle
        = \left\langle \varphi'\left(P_t f^h\right) \mathcal{L}_h P_t f^h, \pi^h \right\rangle
        = -\frac{1}{2} \left\langle \Gamma^h\left( \varphi'\left(P_t f^h\right), P_t f^h \right), \pi^h \right\rangle.
    \end{equation}
\end{proposition}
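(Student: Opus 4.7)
The plan is to establish the two equalities in turn. The first one is obtained by differentiating the series $\langle\varphi(P_t f^h),\pi^h\rangle = \sum_{i\in\mathbb{Z}}\varphi\bigl((P_t f^h)_i\bigr)\pi_i$ termwise in $t$ using the scalar chain rule together with $\partial_t P_t f^h = \mathcal{L}_h P_t f^h$, while the second is a discrete integration by parts resting on the carr\'e du champ identity, the symmetry of $\mathcal{L}_h$ in $\ell^2(\pi^h)$, and the invariance $\langle\mathcal{L}_h g^h,\pi^h\rangle = 0$.

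For the termwise differentiation I would invoke the dominated convergence theorem. By the $\ell^\infty$ contraction $\|P_t f^h\|_{\ell^\infty}\leq \|f^h\|_{\ell^\infty}$ promised in Section~\ref{S1}, every orbit $t\mapsto (P_t f^h)_i$ stays in the compact interval $K:=[-\|f^h\|_{\ell^\infty},\|f^h\|_{\ell^\infty}]$, on which the smooth functions $\varphi$ and $\varphi'$ are bounded by some constant $C_\varphi$. The elementary bound $|\mathcal{L}_h(P_tf^h)_i|\leq 2\|f^h\|_{\ell^\infty}(\alpha_i+\beta_i)$ extracted from \eqref{DBWE} then yields the uniform-in-$t$ domination
\[
\bigl|\varphi'\bigl((P_tf^h)_i\bigr)\,\mathcal{L}_h(P_tf^h)_i\bigr|\,\pi_i \leq 2C_\varphi\|f^h\|_{\ell^\infty}(\alpha_i+\beta_i)\pi_i,
\]
and the dominating sequence is summable by Lemma~\ref{L}. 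Exchanging $d/dt$ with $\sum_i$ delivers the first equality.

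For the second equality I would rewrite the defining identity $2\Gamma^h(f,g) = \mathcal{L}_h(fg) - f\mathcal{L}_h g - g\mathcal{L}_h f$ with the choice $f = \varphi'(P_tf^h)$, $g = P_tf^h$, and pair against $\pi^h$. Since $f,g,fg\in\ell^\infty$, the same type of domination (once more via Lemma~\ref{L}) makes every series absolutely convergent and legitimises both (i) the symmetry $\langle f\mathcal{L}_hg,\pi^h\rangle = \langle g\mathcal{L}_hf,\pi^h\rangle$ recorded below \eqref{DBWE}, and (ii) the invariance $\langle\mathcal{L}_h(fg),\pi^h\rangle = 0$, which one verifies by telescoping with the help of the detailed balance \eqref{DB}. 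Substituting (i) and (ii) into the $\pi^h$-pairing of $2\Gamma^h(f,g)$ collapses the middle expression in the proposition to the stated carr\'e du champ term.

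The main obstacle is really the uniform-in-$t$ domination underlying both applications of dominated convergence and the termwise summation by parts on the full lattice $\mathbb{Z}$; both hinge on Lemma~\ref{L} together with the $\ell^\infty$ contraction of $P_t$ from Section~\ref{S1}. Once those ingredients are granted, the proposition reduces to an algebraic rearrangement of the carr\'e du champ identity.
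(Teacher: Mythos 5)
Your overall strategy is exactly the one the paper intends: the authors omit the proof, saying only that it mirrors the Fubini/dominated-convergence arguments of Section \ref{S1} and that the key point is the bound of $\bigl(\varphi'(P_tf^h)\mathcal{L}_hP_tf^h\bigr)_i$ by a multiple of $\alpha_i+\beta_i$ uniformly in $t$, summable against $\pi^h$ by Lemma \ref{L}. Your treatment of the first equality (difference quotients dominated by $2C_\varphi\|f^h\|_{\ell^\infty}(\alpha_i+\beta_i)\pi_i$ via the $\ell^\infty$ contraction, then dominated convergence) is precisely that argument, and the ingredients you invoke are all established in Section \ref{S1}.

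The second step, however, does not yield the displayed identity as you claim. Pairing $2\Gamma^h(f,g)=\mathcal{L}_h(fg)-f\mathcal{L}_hg-g\mathcal{L}_hf$ with $\pi^h$, using $\langle\mathcal{L}_h(fg),\pi^h\rangle=0$ and the symmetry $\langle f\mathcal{L}_hg,\pi^h\rangle=\langle g\mathcal{L}_hf,\pi^h\rangle$, gives $2\langle\Gamma^h(f,g),\pi^h\rangle=-2\langle f\mathcal{L}_hg,\pi^h\rangle$, i.e.
\[
\left\langle \varphi'\left(P_tf^h\right)\mathcal{L}_hP_tf^h,\ \pi^h\right\rangle
=-\left\langle\Gamma^h\left(\varphi'\left(P_tf^h\right),P_tf^h\right),\ \pi^h\right\rangle,
\]
with constant $-1$, not $-\tfrac12$. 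The same conclusion follows by direct summation by parts with detailed balance \eqref{DB}: both sides equal $-\sum_{i}\alpha_i\pi_i\,\Delta^+\varphi'\bigl((P_tf^h)_i\bigr)\,\Delta^+(P_tf^h)_i$. A sanity check with $\varphi(x)=x^2$ recovers the standard $\frac{d}{dt}\langle(P_tf^h)^2,\pi^h\rangle=-2\langle\Gamma^h(P_tf^h,P_tf^h),\pi^h\rangle$, again consistent with $-1$. So the factor $-\tfrac12$ in the proposition appears to be a typo in the statement, and your assertion that the pairing ``collapses the middle expression to the stated carr\'e du champ term'' silently absorbs a factor of $2$: as written, your computation proves the corrected identity, not the stated one. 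You should either record the constant you actually obtain and flag the discrepancy, or locate the missing factor explicitly; everything else in the proposal is sound and coincides with the paper's intended proof.
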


We omit the proof of Proposition \ref{T} since similar arguments are used to derive the expressions for both $P_t f^h$ and $P_t^* g^h$ in Section \ref{S1}. The key point lies in the boundedness of $(\varphi'(P_t f^h) \mathcal{L}_h P_t f^h)_i$ by $\alpha_i + \beta_i$, uniformly in $t$.

\begin{remark}
    Taking $\varphi(x) = x^2$, we see that the discrete Poincaré inequality implies exponential convergence in $\ell^2(\pi^h)$ via Grönwall's inequality. Moreover, the convexity of $\varphi$ guarantees $\Gamma^h(\varphi'(f^h), f^h) \geq 0$, which ensures that $\langle \varphi(P_t f^h), \pi^h \rangle$ is non-increasing in time.
\end{remark}

In the continuous setting, the Poincar\'e inequality holds for the stationary distribution $\pi$ if $\partial_{xx}u \geq \lambda > 0$, a condition also known as the Bakry-\'Emery criterion \cite{bakry2006diffusions}, which clearly implies Assumption \ref{A0}. We will show that this criterion still holds in the discrete setting. Here, $\partial_{xx} u = u''$ in the one-dimensional case. In \cite{baudoin2017bakry}, Baudoin computes second-order derivatives and uses the \textit{carr\'e du champ} operator to establish exponential convergence to equilibrium for the kinetic Fokker-Planck equation. Inspired by this work, we introduce a discrete \textit{carr\'e du champ} operator and derive the discrete Poincar\'e inequality in Section \ref{S2}. Note that the strong convexity condition $\partial_{xx}u \geq \lambda > 0$ ensures Assumption \ref{A0}.

\begin{theorem}\label{T1}
    Suppose that $B(s)$ satisfies the properties in Assumption \ref{ass:B} and is also convex. If the potential satisfies $\partial_{xx}u \geq \lambda$ for some constant $\lambda > 0$, then the discrete Poincar\'e inequality holds for $\pi^h$, and there exists a constant $\kappa > 0$ independent of $h$, such that the following inequality holds for all $f^h \in \mathcal{H}^1(\pi^h)$,
    \begin{equation}\label{GDPI}
        \left\langle (f^h)^2, \pi^h \right\rangle - \left\langle f^h, \pi^h \right\rangle^2
        \leq \frac{1}{\kappa} \left\langle \Gamma^h(f^h, f^h), \pi^h \right\rangle.
    \end{equation}
\end{theorem}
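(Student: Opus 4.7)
The plan is to import the Bakry–\'Emery $\Gamma_2$ calculus into the discrete setting. Define the iterated carr\'e du champ
\[
\Gamma_2^h(f_i,g_i) := \tfrac{1}{2}\bigl(\mathcal{L}_h\,\Gamma^h(f_i,g_i) - \Gamma^h(\mathcal{L}_h f_i,\,g_i) - \Gamma^h(f_i,\,\mathcal{L}_h g_i)\bigr),
\]
and reduce \eqref{GDPI} to the pointwise curvature-dimension bound $\Gamma_2^h(f_i,f_i) \geq \kappa\,\Gamma^h(f_i,f_i)$ with $\kappa>0$ independent of $h$. The reduction is the standard semigroup computation: Proposition \ref{T} with $\varphi(x)=x^2$ gives $\frac{d}{dt}\langle(P_tf^h)^2,\pi^h\rangle = -\langle\Gamma^h(P_tf^h,P_tf^h),\pi^h\rangle$, while invariance of $\pi^h$ (so $\langle\mathcal{L}_h g^h,\pi^h\rangle = 0$) yields $\frac{d}{dt}\langle\Gamma^h(P_tf^h,P_tf^h),\pi^h\rangle = -2\langle\Gamma_2^h(P_tf^h,P_tf^h),\pi^h\rangle$. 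The pointwise CD bound then produces exponential decay of the Dirichlet energy via Gr\"onwall, and integrating the variance identity on $[0,+\infty)$---together with the ergodicity $P_tf^h \to \langle f^h,\pi^h\rangle$ inherited from irreducibility of the chain---delivers \eqref{GDPI} with Poincar\'e constant $1/(2\kappa)$. Density of $\ell^\infty$ in $\mathcal{H}^1(\pi^h)$ then extends the estimate to the stated class.

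The calculational core is the expansion of $\Gamma_2^h$. Writing $a_j := \Delta^+ f_j$ and substituting $\mathcal{L}_h g_i = \alpha_i \Delta^+ g_i - \beta_i \Delta^- g_i$, after collecting like terms and reorganizing pairs such as $\tfrac12\alpha_i\alpha_{i+1}a_{i+1}^2 - \alpha_i\alpha_{i+1}a_i a_{i+1} = \tfrac12\alpha_i\alpha_{i+1}(a_{i+1}-a_i)^2 - \tfrac12\alpha_i\alpha_{i+1}a_i^2$ (and the mirror identity on the left), one arrives at
\[
2\Gamma_2^h(f_i,f_i) = \tfrac12\alpha_i\alpha_{i+1}(a_{i+1}-a_i)^2 + \tfrac12\beta_i\beta_{i-1}(a_{i-1}-a_{i-2})^2 + \tfrac12\alpha_i C_i^+ a_i^2 + \tfrac12\beta_i C_i^- a_{i-1}^2 - 2\alpha_i\beta_i a_i a_{i-1},
\]
with $C_i^+ = (\alpha_i-\alpha_{i+1}) + 3(\beta_{i+1}-\beta_i) + 2\beta_i$ and $C_i^- = 3(\alpha_{i-1}-\alpha_i) + (\beta_i-\beta_{i-1}) + 2\alpha_i$. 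The cross term is absorbed by AM–GM: $-2\alpha_i\beta_i a_i a_{i-1} \geq -\alpha_i\beta_i(a_i^2+a_{i-1}^2)$, which precisely cancels the $2\beta_i$ inside $C_i^+$ and the $2\alpha_i$ inside $C_i^-$, leaving
\[
2\Gamma_2^h(f_i,f_i) \geq \tfrac12\alpha_i\bigl[(\alpha_i-\alpha_{i+1})+3(\beta_{i+1}-\beta_i)\bigr] a_i^2 + \tfrac12\beta_i\bigl[3(\alpha_{i-1}-\alpha_i)+(\beta_i-\beta_{i-1})\bigr] a_{i-1}^2
\]
after dropping the two manifestly non-negative second-difference squares. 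So $\Gamma_2^h \geq \kappa\,\Gamma^h$ reduces to bounding each bracket below by $2\kappa$ uniformly in $i$ and $h$.

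To close, I exploit the structural assumptions. Setting $v_j := u_{j+1}-u_j$, a double integration against $u'' \geq \lambda$ yields the discrete convexity $v_{j+1}-v_j = u_{j+2}-2u_{j+1}+u_j \geq \lambda h^2$, and the mean value theorem applied to $B$ gives
\[
\alpha_i-\alpha_{i+1} = h^{-2}|B'(\xi_i)|(v_{i+1}-v_i) \geq \lambda|B'(\xi_i)|, \qquad \beta_{i+1}-\beta_i \geq \lambda|B'(\eta_i)|,
\]
for some $\xi_i \in (v_i,v_{i+1})$ and $\eta_i \in (-v_i,-v_{i-1})$, with symmetric estimates for the other two differences. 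Convexity of $B$ combined with $B'(0)=-\tfrac12$ and $B'\leq 0$ forces $B'$ to be non-decreasing and gives $|B'(s)| \geq \tfrac12$ for every $s \leq 0$. Since $\{v_j\}$ is monotone, at most two transition indices can straddle the zero of $v_j$: away from them, either $v_{i+1}\leq 0$ (so $\xi_i \leq 0$ and $|B'(\xi_i)|\geq \tfrac12$) or $v_{i-1}\geq 0$ (so $\eta_i\leq 0$ and $|B'(\eta_i)|\geq \tfrac12$). At the transition indices, the identity $u'(x_\ast)=0$ at the unique minimizer of $u$ together with local boundedness of $u''$ places $\xi_i,\eta_i$ inside a compact neighborhood of $0$ on which $|B'|$ admits a positive lower bound by continuity. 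Combining cases yields a uniform $c>0$ with $|B'(\xi_i)|+3|B'(\eta_i)|\geq c$, and likewise for the mirrored bracket, so the Bakry–\'Emery condition holds with $\kappa = c\lambda/2$.

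The principal obstacle is this last uniformity step: $u''$ is not assumed bounded above, so nothing a priori prevents the transition-region values $v_{i_\ast},v_{i_\ast+1}$ from lying far from $0$; the argument must therefore tie the compactness of $\{\xi_i,\eta_i\}$ at the transition back to the structure of the minimum of $u$ and use the strict negativity of $B'$ on the relevant range rather than any asymptotic behavior of $B'$ at infinity (which, e.g.\ for the Scharfetter–Gummel choice, decays to zero). The remaining work---the $\Gamma_2^h$ expansion, the AM–GM absorption, and the semigroup reduction---is mechanical once the $\Gamma_2$ framework is in place.
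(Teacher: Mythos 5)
Your route is the same as the paper's: the semigroup/$\Gamma_2$ reduction is exactly Proposition \ref{P1} plus Gr\"onwall, your expansion of $2\Gamma_2^h(f_i,f_i)$ is algebraically identical to Lemma \ref{G2} (your AM--GM absorption of the cross term $-2\alpha_i\beta_i a_ia_{i-1}$ is just the paper's discarding of the non-negative term $\alpha_i\beta_i(\Delta^+\Delta^-f_i)^2$), and the resulting sufficient condition --- a uniform positive lower bound on $3\Delta^+\beta_i-\Delta^+\alpha_i$ and $\Delta^-\beta_i-3\Delta^-\alpha_i$ --- is precisely Lemma \ref{DCC}. The only real divergence is technical: you estimate the coefficient differences by the mean value theorem, producing uncontrollable intermediate points $\xi_i,\eta_i$, whereas the paper writes $\Delta^\pm\alpha_i,\Delta^\pm\beta_i$ as double integrals of $B'(\cdot)\,u''(\cdot)$, pulls out $u''\geq\lambda$, and then bounds $-B'$ pointwise; outside a fixed ball where $u$ is monotone the sign argument gives $-B'\geq 1/2$, and inside the ball continuity and boundedness on a compact set finish the job.

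The obstacle you flag at the transition indices is not a genuine one, and you should not leave it open. The (at most two) transition indices lie within $O(h)$ of the minimizer $x_*$ of $u$, so $|v_{i_*-1}|,|v_{i_*}|,|v_{i_*+1}|\leq Ch^2\sup_{|y-x_*|\leq 3h}|u''|$; only \emph{local} boundedness of $u''$ near $x_*$ is needed, and that is automatic from smoothness of $u$ --- global boundedness of $u''$ from above is irrelevant. Hence for $h$ in a bounded range the arguments of $B'$ at the exceptional indices stay in a fixed compact interval on which $-B'$ has a positive minimum (this is exactly the paper's ``inside $B(0,R)$'' step in subsection \ref{S2.2}). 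If you prefer to avoid the compactness argument altogether, use monotonicity of $-B'$ instead of the mean value theorem: since $v_{i+1}\geq v_i+\lambda h^2$ and $B$ is decreasing,
\begin{equation*}
    B(v_i)-B(v_{i+1})\;\geq\;\int_{v_i}^{v_i+\lambda h^2}\bigl(-B'(s)\bigr)\,ds\;\geq\;\lambda h^2\bigl(-B'(v_i+\lambda h^2)\bigr),
\end{equation*}
which localizes the evaluation point by hand. With either repair your argument closes and coincides with the paper's proof; the remaining discrepancies (the factor in Proposition \ref{T} and your $1/(2\kappa)$ normalization) are harmless constants.
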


\begin{remark}
    For any $f^h \in \ell^2(\pi^h) \setminus \mathcal{H}^1(\pi^h)$, the right-hand side of \eqref{GDPI} becomes infinite, making the inequality trivially satisfied. Therefore, our result establishes the discrete Poincar\'e inequality for all $f^h \in \ell^2(\pi^h)$.
\end{remark}

In the continuous setting, it is known that the strong convexity condition $\partial_{xx} u \geq \lambda > 0$ is sufficiently strong to imply the exponential convergence in $H^1(\pi)$. Under the same convexity assumption, we can establish analogous exponential convergence in $\mathcal{H}^1(\pi^h)$.

\begin{corollary}\label{C1}
    Suppose the conditions in Theorem \ref{T1} hold. Then there exists a constant $\theta > 0$ independent of $h$ such that the following inequality holds for all $f^h \in \ell^\infty$,
    \begin{equation}
        \begin{aligned}
            &\left\langle \left(P_t f^h - \langle P_t f^h, \pi^h \rangle \right)^2 + \Gamma^h(P_t f^h, P_t f^h), \pi^h \right\rangle \\
            &\qquad \leq e^{-\theta t} \left\langle \left(f^h - \langle f^h, \pi^h \rangle \right)^2 + \Gamma^h(f^h, f^h), \pi^h \right\rangle.
        \end{aligned}
    \end{equation}
\end{corollary}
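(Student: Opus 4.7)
The key insight is that the Poincar\'e inequality in Theorem \ref{T1} delivers a spectral gap of size $\kappa$ for the $\pi^h$-self-adjoint operator $-\mathcal{L}_h$ on the mean-zero subspace of $\ell^2(\pi^h)$, which forces exponential decay of both the variance and the Dirichlet energy at the common rate $2\kappa$.

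First I would reduce to the mean-zero case: set $\bar f := \langle f^h, \pi^h\rangle$ and $g^h := f^h - \bar f$. Since $\mathcal{L}_h \mathbf{1} = 0$, the semigroup $P_t$ preserves constants, so $P_t f^h = \bar f + P_t g^h$ and $\langle P_t g^h, \pi^h\rangle = 0$ for every $t \geq 0$. Because $\Gamma^h$ annihilates additive constants, $\Gamma^h(P_t f^h, P_t f^h) = \Gamma^h(P_t g^h, P_t g^h)$, and the corollary reduces to establishing $\phi(t) \leq e^{-\theta t}\phi(0)$ for
\[
\phi(t) := \bigl\|P_t g^h\bigr\|_{\ell^2(\pi^h)}^2 + \bigl\langle \Gamma^h(P_t g^h, P_t g^h), \pi^h\bigr\rangle.
\]

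Next I would differentiate $\phi$ in time. Applying Proposition \ref{T} with $\varphi(x) = x^2$, together with the identity $\langle h\,\mathcal{L}_h h, \pi^h\rangle = -\langle \Gamma^h(h,h), \pi^h\rangle$ that follows from $\pi^h$-symmetry and the definition of $\Gamma^h$, gives $\frac{d}{dt}\|P_t g^h\|_{\ell^2(\pi^h)}^2 = -2\langle \Gamma^h(P_t g^h, P_t g^h), \pi^h\rangle$. An analogous computation (product rule plus self-adjointness of $\mathcal{L}_h$ on $\ell^2(\pi^h)$) yields
\[
\frac{d}{dt}\bigl\langle \Gamma^h(P_t g^h, P_t g^h), \pi^h\bigr\rangle \;=\; -2\bigl\|\mathcal{L}_h P_t g^h\bigr\|_{\ell^2(\pi^h)}^2.
\]
I would then close the loop via two applications of Theorem \ref{T1} to the mean-zero field $h^h := P_t g^h$. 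The Poincar\'e inequality gives $\kappa\|h^h\|_{\ell^2(\pi^h)}^2 \leq \langle \Gamma^h(h^h, h^h), \pi^h\rangle$ directly, while a Cauchy--Schwarz step combined with the same inequality furnishes an \emph{energy-to-Laplacian} bound
\[
\bigl\langle \Gamma^h(h^h, h^h), \pi^h\bigr\rangle \;=\; \bigl\langle -\mathcal{L}_h h^h, h^h\bigr\rangle_{\ell^2(\pi^h)} \;\leq\; \|\mathcal{L}_h h^h\|_{\ell^2(\pi^h)}\cdot \kappa^{-1/2}\bigl\langle \Gamma^h(h^h, h^h), \pi^h\bigr\rangle^{1/2},
\]
whence $\kappa\langle \Gamma^h(h^h, h^h), \pi^h\rangle \leq \|\mathcal{L}_h h^h\|_{\ell^2(\pi^h)}^2$. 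Adding the two resulting differential inequalities produces $\phi'(t) \leq -2\kappa\,\phi(t)$, and Gr\"onwall's inequality closes the argument with $\theta = 2\kappa$.

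The main obstacle I anticipate is the rigorous justification of differentiation under the infinite sum for the energy derivative, since after invoking the product rule one encounters $\mathcal{L}_h^2 P_t g^h$. This is handled in the same spirit as the remark following Proposition \ref{T}: from the pointwise bound $|\mathcal{L}_h P_t g^h|_i \leq 2(\alpha_i + \beta_i)\|g^h\|_{\ell^\infty}$, the fact that $\alpha_i + \beta_i$ is bounded in $i$ for each fixed $h$ (so that $\mathcal{L}_h^2 P_t g^h$ is controlled in turn), and the finite total mass of $\pi^h$, one extracts the domination needed to invoke the dominated convergence theorem and interchange $d/dt$ with the summation.
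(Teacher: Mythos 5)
Your core argument is correct, but it takes a genuinely different route from the paper. The paper obtains the decay of the energy term from the curvature condition: Lemma \ref{DCC} together with Proposition \ref{P1} gives the pointwise estimate \eqref{CoG}, hence \eqref{CoG2}, and the variance decay then comes from integrating the identity \eqref{II} against $\pi^h$, applying the Poincar\'e inequality \eqref{PI1} to $P_{T-s}f^h$, and using Gr\"onwall. You never touch $\Gamma^h_2$: you differentiate the variance and the Dirichlet energy directly, use $\tfrac{d}{dt}\langle\Gamma^h(P_tg^h,P_tg^h),\pi^h\rangle=-2\|\mathcal{L}_hP_tg^h\|^2_{\ell^2(\pi^h)}$, and convert the spectral gap of Theorem \ref{T1} into the bound $\kappa\langle\Gamma^h(g,g),\pi^h\rangle\le\|\mathcal{L}_hg\|^2_{\ell^2(\pi^h)}$ for mean-zero $g$ via Cauchy--Schwarz; summing the two differential inequalities and applying Gr\"onwall gives $\theta=2\kappa$, independent of $h$. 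This is a clean spectral-gap argument whose only input is the conclusion of Theorem \ref{T1} (the convexity hypotheses enter solely through that theorem), whereas the paper's route additionally yields the stronger pointwise decay \eqref{CoG} of $\Gamma^h(P_tf^h,P_tf^h)$ at the cost of the curvature machinery.

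There is, however, a flaw in your justification of the differentiations. The claim that $\alpha_i+\beta_i$ is bounded in $i$ for fixed $h$ is false: by the third condition of Assumption \ref{ass:B}, $B(-s)=B(s)e^s$, so for instance for the Scharfetter--Gummel scheme with a strongly convex potential one has $\beta_i=h^{-2}B(u_{i-1}-u_i)\sim h^{-2}(u_i-u_{i-1})\to+\infty$ as $i\to+\infty$ (and symmetrically $\alpha_i\to+\infty$ as $i\to-\infty$). Hence neither the finiteness of $\|\mathcal{L}_hP_tg^h\|_{\ell^2(\pi^h)}$ nor the domination needed to differentiate $\langle\Gamma^h(P_tg^h,P_tg^h),\pi^h\rangle$ term by term follows from Lemma \ref{L}; you need weighted summability of the type $\sum_i(\alpha_i+\beta_i)^2\pi_i<+\infty$ (and analogous sums with products of neighboring coefficients). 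This does hold under the standing assumptions --- Lipschitz continuity of $B$ forces at most linear growth of $B$ at $-\infty$, so $\alpha_i+\beta_i$ grows at most like the increments of $u$, while $\pi_i\propto e^{-u_i}$ decays far faster under strong convexity --- but it must be stated and proved; as written, your domination step would fail. The paper avoids this issue because its proof of Corollary \ref{C1} only requires the summability of Lemma \ref{L} through Propositions \ref{T} and \ref{P1}.
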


As in the continuous case, we can generalize the above result via a Holley-Stroock perturbation-type argument \cite{holley1986logarithmic}. When $u$ differs from $\widetilde{u}$ on a compact set and $|u - \widetilde{u}|$ is bounded, the corresponding stationary distributions are equivalent. This yields the following result.

\begin{corollary}\label{C2}
    Suppose that $B(s)$ satisfies the properties in Assumption \ref{ass:B} and is also convex. If the potential $u$ admits a decomposition $u=\widetilde{u}+v$, where $\widetilde{u}$ is strongly convex and $v$ is a bounded perturbation
    supported on some compact set, then the discrete Poincar\'e inequality holds for $\pi^h$, and there exists a constant $\kappa > 0$ independent of $h$ such that inequality \eqref{GDPI} holds for all $f^h \in \mathcal{H}^1(\pi^h)$.
\end{corollary}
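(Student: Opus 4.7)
The plan is to execute a discrete analogue of the Holley--Stroock perturbation argument. Since $\widetilde{u}$ is strongly convex, Theorem \ref{T1} applied to $\widetilde{u}$ yields the discrete Poincar\'e inequality for the associated stationary distribution $\widetilde{\pi}^h$ with some constant $\widetilde{\kappa}>0$ independent of $h$. The task is then to transfer this inequality to $\pi^h$ by establishing the uniform-in-$h$ equivalence of (i) the two stationary measures and (ii) the two Dirichlet forms, and closing via the variational characterization of variance.

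I would first compare the stationary measures. Since $\pi_i\propto e^{-u_i}=e^{-v_i}e^{-\widetilde{u}_i}$ and $v$ is bounded with compact support, the ratio of normalizing constants is controlled by $e^{\pm\|v\|_\infty}$, so there are positive constants $\underline{C},\overline{C}$ independent of $h$ with $\underline{C}\,\widetilde{\pi}_i\le\pi_i\le\overline{C}\,\widetilde{\pi}_i$ for all $i\in\mathbb{Z}$. Next, using detailed balance \eqref{DB}, I would rewrite the Dirichlet forms in divergence form as
\[
\langle\Gamma^h(f^h,f^h),\pi^h\rangle=\sum_{i\in\mathbb{Z}}J_i(\Delta^+f_i)^2,\qquad J_i:=\alpha_i\pi_i=\beta_{i+1}\pi_{i+1},
\]
and similarly for the coefficients $\widetilde{J}_i$ associated with $\widetilde{u}$. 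For indices $i$ with $v_i=v_{i+1}=0$ one has $\alpha_i=\widetilde{\alpha}_i$, hence $J_i/\widetilde{J}_i$ equals the constant $Z_{\widetilde{u}}/Z_u$. The remaining indices lie in a bounded neighborhood of $\mathrm{supp}\,v$; on this compact neighborhood $\widetilde{u}$ is smooth, so $|\widetilde{u}_{i+1}-\widetilde{u}_i|=\mathcal{O}(h)$, and $|v_{i+1}-v_i|\le 2\|v\|_\infty$, so both arguments of $B$ lie in an $h$-independent bounded interval. Continuity and positivity of $B$ then yield constants $c_1,c_2>0$ with $c_1\widetilde{J}_i\le J_i\le c_2\widetilde{J}_i$.

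Chaining these estimates through the variational characterization of variance gives, for any $f^h\in\mathcal{H}^1(\pi^h)$,
\[
\langle(f^h-\langle f^h,\pi^h\rangle)^2,\pi^h\rangle\le \overline{C}\,\langle(f^h-\langle f^h,\widetilde{\pi}^h\rangle)^2,\widetilde{\pi}^h\rangle\le \frac{\overline{C}}{\widetilde{\kappa}}\sum_{i\in\mathbb{Z}}\widetilde{J}_i(\Delta^+f_i)^2\le \frac{\overline{C}}{c_1\widetilde{\kappa}}\langle\Gamma^h(f^h,f^h),\pi^h\rangle,
\]
which is \eqref{GDPI} with $\kappa=c_1\widetilde{\kappa}/\overline{C}$, independent of $h$.

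The main obstacle is the edge-weight comparison, since $B$ need not be globally bounded above or away from zero (the Scharfetter--Gummel $B(s)=s/(e^s-1)$ grows linearly as $s\to-\infty$), so a uniform comparison of $B(u_{i+1}-u_i)$ with $B(\widetilde{u}_{i+1}-\widetilde{u}_i)$ is not immediate. The resolution is precisely the compact support of $v$: it confines the nontrivial part of the comparison to a bounded stencil, on which both increments remain uniformly bounded in $h$, reducing the question to the behavior of $B$ on a compact interval, where continuity and positivity suffice.
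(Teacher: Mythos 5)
Your proposal is correct and follows essentially the same route as the paper's proof: a Holley--Stroock transfer of Theorem \ref{T1} applied to $\widetilde{u}$, with the only cosmetic difference that you fold the measure and coefficient comparisons into the conductances $J_i=\alpha_i\pi_i$ via detailed balance, whereas the paper bounds $\pi^h/\widetilde{\pi}^h$ and $\sup_i \widetilde{\alpha}_i/\alpha_i$, $\sup_i\widetilde{\beta}_i/\beta_i$ separately before chaining the same inequalities. One small caveat: your justification that the arguments of $B$ stay in an $h$-independent compact interval is only valid for $h$ in a bounded range (for large $h$ the increment $|\widetilde{u}_{i+1}-\widetilde{u}_i|$ near $\mathrm{supp}\,v$ is not uniformly bounded), but this is easily repaired without any compactness by noting that monotonicity of $B$ together with $\ln B(-s)-\ln B(s)=s$ gives $e^{-|\delta|}\le B(s+\delta)/B(s)\le e^{|\delta|}$ for all $s\in\mathbb{R}$ and $|\delta|\le 2\|v\|_{\ell^\infty}$, which yields the edge-weight comparison uniformly in $h$.
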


In \cite{bakry2008rate}, Bakry, Cattiaux, and Guillin have employed the powerful method of Lyapunov functions to prove the Poincar\'e inequality for a log-concave stationary distribution. Following their observation that $\langle \Gamma(f^2 / W, W), \pi \rangle \leq \langle \Gamma(f, f), \pi \rangle$ holds for all smooth functions $f$ and $W$, we will derive an analogous discrete result and establish the discrete Poincar\'e inequality in the full space in Section \ref{S3}. Notably, we do not require convexity of $B$ or $u$ in the following result.

\begin{theorem}\label{T2}
    Under Assumption \ref{A0} and \ref{ass:B}, there exists $h_0 > 0$ such that for any $0 < h < h_0$, the discrete Poincar\'e inequality holds for $\pi^h$,  and there exists a constant $\kappa > 0$ independent of $h$ such that inequality \eqref{GDPI} holds for all $f^h \in \mathcal{H}^1(\pi^h)$.
\end{theorem}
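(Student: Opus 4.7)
The plan is to adapt the Lyapunov function strategy of Bakry, Cattiaux, and Guillin \cite{bakry2008rate} to the continuous-time Markov chain generated by $\mathcal{L}_h$. Three ingredients are required: (a) a Lyapunov sequence $W^h=\{W_i\}$ with $W_i\geq 1$ satisfying a discrete drift estimate of the form $-(\mathcal{L}_hW^h)_i/W_i \geq \phi_0 - c\mathbf{1}_{\{|x_i|\leq R\}}$; (b) a discrete counterpart of the pointwise inequality $\Gamma(W,f^2/W)\leq \Gamma(f,f)$; and (c) a local discrete Poincar\'e inequality on the finite index set $\{|x_i|\leq R\}$, with constant uniform in $h$.

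For (a) I would take $W_i:=e^{\delta u(x_i)/2}$ with $\delta>0$ small. A direct computation using \eqref{C} and the relation $\ln B(-s)-\ln B(s)=s$ from Assumption \ref{ass:B} writes $(\mathcal{L}_hW^h)_i/W_i$ as an expression involving $B(\pm h u'(x_i)/2 + O(h^2))$ after factoring, and Assumption \ref{A0} combined with the monotonicity of $B$ ensures the ratio becomes $\leq -\phi_0<0$ for $|x_i|>M$. The Lipschitz regularity of $B$ and the smoothness of $u$ control the $O(h)$ errors, giving the required drift provided $h<h_0$.

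For (b) I would establish the pointwise inequality $\Gamma^h(W^h,(f^h)^2/W^h)_i \leq \Gamma^h(f^h,f^h)_i$. By \eqref{G}, each side splits into a $\tfrac12\alpha_i$-weighted and a $\tfrac12\beta_i$-weighted contribution, and each two-point piece reduces to the elementary inequality $\Delta^{\pm} W_i\cdot\Delta^{\pm}(f^2/W)_i \leq (\Delta^{\pm} f_i)^2$; this is verified by expanding $f_{i+1}^2/W_{i+1}-f_i^2/W_i$ in terms of $\Delta^+ f_i$ and $\Delta^+ W_i$ and completing the square in $\Delta^+ f_i-(f_i/W_i)\Delta^+ W_i$, the discrete analog of $|\nabla f-(f/W)\nabla W|^2\geq 0$. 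Summing against $\pi^h$ and using symmetry of $\mathcal{L}_h$ with respect to $\pi^h$ yields
\[
\sum_{i\in\mathbb{Z}}\Big(-\frac{(\mathcal{L}_hW^h)_i}{W_i}\Big)f_i^2\pi_i \;\leq\; \langle \Gamma^h(f^h,f^h),\pi^h\rangle,
\]
which combined with (a) and $W_i\geq 1$ gives $\phi_0\langle (f^h)^2,\pi^h\rangle \leq \langle \Gamma^h(f^h,f^h),\pi^h\rangle + c\sum_{|x_i|\leq R}f_i^2\pi_i$. Applying this to the centered function $f^h-\langle f^h,\pi^h\rangle$ and dominating $\sum_{|x_i|\leq R}(f_i-\langle f^h,\pi^h\rangle)^2\pi_i$ by a local discrete Poincar\'e inequality on $[-R,R]$ produces \eqref{GDPI}. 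The local inequality follows by comparing $\pi^h$ restricted to $[-R,R]$ with the uniform discrete measure on the finite grid (the Radon-Nikodym derivative is bounded above and below uniformly in $h$ since $u$ is smooth on a compact set) and invoking the bounded-domain discrete Poincar\'e inequalities of \cite{bessemoulin2015discrete,cances2020large}; choosing $R$ large enough that $\pi^h(\{|x_i|>R\})/\pi^h(\{|x_i|\leq R\})$ is small enough to absorb the residual term gives $\kappa$ independent of $h$.

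The main obstacle I expect is step (b): the inequality $\Gamma(W,f^2/W)\leq \Gamma(f,f)$ is an exact algebraic identity in the continuous case, but its discrete version requires a careful AM-GM-type manipulation to produce a nonnegative remainder on each edge $\{i,i+1\}$ that is correctly weighted by the rates $\alpha_i,\beta_{i+1}$. A secondary technical challenge is ensuring that all constants ($\phi_0$, $c$, and the local Poincar\'e constant) are genuinely uniform in $h$, which is precisely where the hypothesis $0<h<h_0$ is used to absorb $O(h)$ perturbations coming from the Lipschitz regularity of $B$ and the smoothness of $u$.
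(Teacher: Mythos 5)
Your overall architecture is exactly the paper's: a Lyapunov drift estimate, the pointwise inequality $\Gamma^h\bigl(W^h,(f^h)^2/W^h\bigr)\leq\Gamma^h(f^h,f^h)$ proved by completing the square edge by edge (this is precisely Proposition \ref{P2}, and your summation-by-parts step is Lemma \ref{L0}), and a local discrete Poincar\'e inequality with $h$-independent constant (Lemma \ref{L2}; the paper proves it directly by a Jensen-type argument instead of citing bounded-domain results, and no smallness of the tail mass is needed --- the constants combine as $\kappa=\theta\kappa_R/(\kappa_R+b)$). Steps (b) and (c) of your plan are therefore sound and essentially coincide with the paper.

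The genuine gap is in step (a), the choice $W_i=e^{\delta u(x_i)/2}$. Writing $s_i^{+}=u_{i+1}-u_i$, $s_i^{-}=u_i-u_{i-1}$, one has $\mathcal{L}_hW_i/W_i=\alpha_i\bigl(e^{\delta s_i^{+}/2}-1\bigr)+\beta_i\bigl(e^{-\delta s_i^{-}/2}-1\bigr)$ with $\alpha_i=h^{-2}B(s_i^{+})$, $\beta_i=h^{-2}B(-s_i^{-})$. Assumption \ref{A0} gives only the one-sided bound $s_i^{\pm}\gtrsim a x_i h$; it bounds neither $u'$ from above nor $u''$, so your expansion ``$B(\pm hu'(x_i)/2+O(h^2))$'' is not uniform in $i$, and worse, there can be grid points far out where $s_i^{+}$ is of order one while $s_i^{-}$ is only of order $x_ih$. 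At such a point the positive contribution is of size $h^{-2}B(s_i^{+})(e^{\delta s_i^{+}/2}-1)\sim h^{-2}$, while the negative contribution is only of size $h^{-2}B(-s_i^{-})\bigl(1-e^{-\delta s_i^{-}/2}\bigr)\sim h^{-1}x_i$, so the drift inequality fails on the range $R<x_i\ll h^{-1}$ and cannot be rescued by enlarging $R$ independently of $h$. This is not a discretization artifact: in the continuous setting $\mathcal{L}e^{\delta u/2}/e^{\delta u/2}=\tfrac{\delta}{2}u''-\tfrac{\delta}{2}\bigl(1-\tfrac{\delta}{2}\bigr)(u')^2$, which is not negative at infinity under Assumption \ref{A0} alone (no relation between $u''$ and $(u')^2$ is assumed). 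The paper avoids this by taking the potential-independent choice $W(x)=e^{|x|}$ (Lemma \ref{L1}): then $W_{i\pm1}/W_i=e^{\pm h}$, and the drift follows solely from the monotonicity of $B$, namely $B(s_i^{+})\leq B(0)=1\leq B(-s_i^{-})$ together with the quantitative gap coming from $B'(0)=-1/2$ and $s_i^{-}\geq 2aRh$, the comparison $e^{h}+e^{-h}-2=O(h^2)$ versus $1-e^{-h}=O(h)$ delivering a drift constant uniform for $0<h<h_0$; this is the only place the restriction $h<h_0$ enters. Replacing your $W$ by $e^{|x|}$ (or adding hypotheses controlling $u''$ by $(u')^2$) closes the gap, after which the rest of your argument goes through as in the paper.
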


\begin{remark}
    The difference in scope between Corollary \ref{C2} (valid for all $h > 0$) and Theorem \ref{T2} (requiring $h < h_0$) stems from their distinct analytical frameworks. Corollary \ref{C2} follows from $\Gamma$-calculus applied to the backward equation of a continuous-time Markov chain, which inherently preserves ergodicity for all $h > 0$. The proof of Theorem \ref{T2} relies directly on the discrete backward equation \eqref{DBWE}, and the restriction to small $h$ arises from the consistency in the discrete approximation during the construction of Lyapunov function.
\end{remark}

In this work, we regard the discrete equation \eqref{DFPE} as the forward equation of a jump process. In Section \ref{S1}, we establish the well-posedness and ergodicity of the associated semigroup $P_t$ by leveraging fundamental properties of continuous-time Markov chains. Building on this foundation, in Section \ref{S2}, we discuss the operator $\Gamma^h$ and its iterated form $\Gamma_2^h$, derive a lower bound for $\Gamma^h_2$ in terms of $\Gamma^h$, and and present the proof of Theorem \ref{T1} alongside its corollaries. Finally, in Section \ref{S3}, we adapt techniques from the continuous setting to construct a Lyapunov function for the discrete problem, which enables the extension of a local discrete Poincar\'e inequality to the full space, culminating in the proof of Theorem \ref{T2}.

\section{Properties of the continuous-time Markov chain}\label{S1}
In this section, we introduce some fundamental properties of continuous-time Markov chains, also known as jump processes when defined on a discrete state space. Let $Z_t$ be a jump process corresponding to \eqref{DFPE}, and define $p_t(i, j) := \mathbb{P}(Z_t = j\ |\ Z_0 = i) \geq 0$. Then $p_t(i, j)$ satisfies the forward equation
\begin{equation}\label{GFWE}
    \frac{d}{dt} p_t(i, j) = \alpha_{j-1} p_t(i, j-1) + \beta_{j+1} p_t(i, j+1) - (\alpha_j + \beta_j) p_t(i, j),
\end{equation}
as well as the backward equation \cite[Theorem 2.14]{liggett2010continuous}
\begin{equation}\label{GBWE}
    \frac{d}{dt} p_t(i, j) = \alpha_i p_t(i+1, j) + \beta_i p_t(i-1, j) - (\alpha_i + \beta_i) p_t(i, j).
\end{equation}
Define the $Q$-matrix by $Q(i, j) := \lim_{t \to 0} p_t(i, j)/t$. Then
\begin{equation*}
    Q(i, i+1) = \alpha_i, \quad Q(i, i) = -(\alpha_i + \beta_i), \quad Q(i, i-1) = \beta_i.
\end{equation*}
Let $\tau_i := \inf_{t > 0} \{t\ |\ Z_0 = i,\ Z_t \neq i\}$ denote the first exit time from state $i$. By \cite[Proposition 2.29]{liggett2010continuous},
\begin{equation*}
    \mathbb{P}(\tau_i > t) = e^{-(\alpha_i + \beta_i)t},
\end{equation*}
and the transition probabilities at the first jump are
\begin{equation*}
    \mathbb{P}(Z_{\tau_i} = i+1\ |\ Z_0 = i) = \frac{\alpha_i}{\alpha_i + \beta_i}, \quad
    \mathbb{P}(Z_{\tau_i} = i-1\ |\ Z_0 = i) = \frac{\beta_i}{\alpha_i + \beta_i}.
\end{equation*}

A Markov chain is irreducible if $p_t(i,j)>0$ for all $i,j\in\mathbb{Z}$ and $t>0$ \cite[Definition 2.47]{liggett2010continuous}. Since $Q(i,i-1)$ and $Q(i,i+1)$ are positive for all $i\in\mathbb{Z}$, we can claim that $Z_t$ is an irreducible Markov chain. Recall that the detailed balance condition \eqref{DB} holds, and consequently $\pi^h$ is the unique stationary distribution of $Z_t$. Then the transition probabilities converge to the stationary distribution by \cite[Theorem 2.66]{liggett2010continuous},
\begin{equation}
    \lim_{t\to+\infty}{p_t(i,j)}=\pi_j,\quad\forall\ i,j\in\mathbb{Z}.
\end{equation}
Since \cite[Corollary 2.34]{liggett2010continuous} ensures that the total probability is conserved $\sum_{j\in\mathbb{Z}}{p_t(i,j)}=1$, then by \cite[Theorem 2.26]{liggett2010continuous}, $p_t(i,j)$ is the unique solution to \eqref{GBWE} with initial state $p_0(i,j)=\delta_{ij}$. Consequently, $p_t(i,j)=0$ is the unique solution corresponding to the initial condition $p_0(i,j)=0$, leading to the uniqueness of solutions to the backward equation \eqref{GBWE}.

Next we consider a general initial state $f^h\in\ell^\infty$, that is, the sequence $f^h=\{f_i\}_{i\in\mathbb{Z}}$ is bounded. Let $P_t$ be the semi-group generated by $\mathcal{L}_h$. Define $f^h_t$ as
\begin{equation*}
    \left(f^h_t\right)_i:=\mathbb{E}\left[f^h(Z_t)\ |\ Z_0=i\right]=\sum_{j\in\mathbb{Z}}{f_jp_t(i,j)},
\end{equation*}
then one can get the maximum principle
\begin{equation}
    \left|\left(f^h_t\right)_i\right|=\left|\mathbb{E}\left[f^h(Z_t)\ |\ Z_0=i\right]\right|\leq\mathbb{E}\left[\left\|f^h(Z_t)\right\|_{\ell^\infty}\ |\ Z_0=i\right]=\left\|f^h\right\|_{\ell^\infty},
\end{equation}
that is, $P_tf^h\in\ell^\infty$ for all $f^h\in\ell^\infty$. For a fixed index $i$, it follows by Fubini's theorem and the backward equation \eqref{GBWE} that
\begin{align*}
    &\left(f^h_{t+\Delta t}\right)_i-\left(f^h_t\right)_i\\
    &\quad=\sum_{j\in\mathbb{Z}}{f_j\Delta t\int^1_0}{p'_{t+s\Delta t}(i,j)}ds\\
    &\quad=\sum_{j\in\mathbb{Z}}{f_j\Delta t\int^1_0{\left[\alpha_ip_{t+s\Delta t}(i+1,j)+\beta_ip_{t+s\Delta t}(i-1,j)-(\alpha_i+\beta_i)p_{t+s\Delta t}(i,j)\right]}ds}\\
    &\quad=\int^1_0{\sum_{j\in\mathbb{Z}}{f_j\Delta t\left[\alpha_ip_{t+s\Delta t}(i+1,j)+\beta_ip_{t+s\Delta t}(i-1,j)-(\alpha_i+\beta_i)p_{t+s\Delta t}(i,j)\right]}}ds,
\end{align*}
and by dominated convergence theorem,
\begin{align*}
    \frac{d}{dt}\left(f^h_t\right)_i
    &=\sum_{j\in\mathbb{Z}}{f_j\left[\alpha_ip_t(i+1,j)+\beta_ip_t(i-1,j)-(\alpha_i+\beta_i)p_t(i,j)\right]}\\
    &=\alpha_i\left(\left(f^h_t\right)_{i+1}-\left(f^h_t\right)_i\right)-\beta_i\left(\left(f^h_t\right)_i-\left(f^h_t\right)_{i-1}\right),
\end{align*}
that is, $f^h_t$ satisfies the backward equation \eqref{DBWE}. Then we claim that $P_tf^h=f^h_t$ by the uniqueness of solutions.

For a fixed index $i$, since $p_t(i,j)$ satisfies the forward equation \eqref{DFPE} with respect to $j$, then $p_t(i,j)/\pi_j$ will satisfy the backward equation \eqref{DBWE}. By the maximum principle,
\begin{equation*}
    \frac{p_t(i,j)}{\pi_j}\leq\max_{j\in\mathbb{Z}}{\frac{p_0(i,j)}{\pi_j}}=\frac{1}{\pi_i},
\end{equation*}
that is, $f_jp_t(i,j)/\pi_j$ is uniformly bounded with respect to $j$. It follows by dominated convergence theorem that
\begin{equation*}
    \lim_{t\to+\infty}{\left(P_tf^h\right)_i}=\lim_{t\to+\infty}{\sum_{j\in\mathbb{Z}}{f_jp_t(i,j)}}=\lim_{t\to+\infty}{\sum_{j\in\mathbb{Z}}{f_j\frac{p_t(i,j)}{\pi_j}\pi_j}}=\sum_{j\in\mathbb{Z}}{f_j\pi_j}=\left\langle f^h,\pi^h\right\rangle,
\end{equation*}
and by the uniform boundedness of $P_tf^h$,
\begin{equation*}
    \lim_{t\to+\infty}{\left\langle\left(P_tf^h-\left\langle f^h,\pi^h\right\rangle\right)^2,\pi^h\right\rangle}=0.
\end{equation*}

Now we have obtained the well-posedness for the semi-group $P_t:\ell^\infty\to\ell^\infty$, and thus for its dual semi-group $P^*_t:\ell^1\to\ell^1$ defined by
\begin{equation}
    \left\langle f^h,P^*_tg^h\right\rangle=\left\langle P_tf^h,g^h\right\rangle,\qquad\forall\ f^h\in\ell^\infty,\ g^h\in\ell^1.
\end{equation}
And it can be verified that
\begin{align*}
    \left\langle f^h,P^*_tg^h\right\rangle
    &=\sum_{i\in\mathbb{Z}}{(P_tf^h)_ig_i}\\
    &=\sum_{i\in\mathbb{Z}}{\left(\sum_{j\in\mathbb{Z}}{f_jp_t(i,j)}\right)g_i}\\
    &=\sum_{j\in\mathbb{Z}}{f_j\left(\sum_{i\in\mathbb{Z}}{g_ip_t(i,j)}\right)}.
\end{align*}
Since $0\leq p_t(i,j)\leq1$ for all $i,j\in\mathbb{Z}$ and $t>0$, one can have
\begin{equation}
    \left(P^*_tg^h\right)_j=\sum_{i\in\mathbb{Z}}{g_ip_t(i,j)}.
\end{equation}
It remains to show $P^*_tg^h$ satisfies \eqref{DFPE}. Let $\Delta t>0$ and fix an index $j$, it follows by Fubini's theorem and the forward equation \eqref{GFWE} that
\begin{align*}
    &\left(P^*_{t+\Delta t}g^h\right)_j-\left(P^*_tg^h\right)_j\\
    &\qquad=\sum_{i\in\mathbb{Z}}{g_i\left(p_{t+\Delta t}(i,j)-p_t(i,j)\right)}\\
    &\qquad=\sum_{i\in\mathbb{Z}}{g_i\Delta t\int^1_0{\left[\alpha_{j-1}p_{t+s\Delta t}(i,j-1)+\beta_{j+1}p_{t+s\Delta t}(i,j+1)-\left(\alpha_j+\beta_j\right)p_{t+s\Delta t}(i,j)\right]}ds}\\
    &\qquad=\int^1_0{\sum_{i\in\mathbb{Z}}{g_i\Delta t\left[\alpha_{j-1}p_{t+s\Delta t}(i,j-1)+\beta_{j+1}p_{t+s\Delta t}(i,j+1)-\left(\alpha_j+\beta_j\right)p_{t+s\Delta t}(i,j)\right]}}ds,
\end{align*}
and by dominated convergence theorem one can have
\begin{align*}
    \frac{d}{dt}\left(P^*_tg^h\right)_j
    &=\sum_{i\in\mathbb{Z}}{g_i\left[\alpha_{j-1}p_t(i,j-1)+\beta_{j+1}p_t(i,j+1)-\left(\alpha_j+\beta_j\right)p_t(i,j)\right]}\\
    &=\alpha_{j-1}\left(P^*_tg^h\right)_{j-1}+\beta_{j+1}\left(P^*_tg^h\right)_{j+1}-\left(\alpha_j+\beta_j\right)\left(P^*g^h\right)_j,
\end{align*}
that is, $\mathcal{L}^*_h$ is the generator of the semi-group $P^*_t$.

We summarize the above results as follows:
\begin{proposition}\label{ptf}
    Under Assumption \ref{A0} and \ref{ass:B}, the semi-group $P^*_t:\ell^1\to\ell^1$ generated by $\mathcal{L}^*_h$ is well-posed,
    \begin{equation*}
        \left(P^*_tg^h\right)_j=\sum_{i\in\mathbb{Z}}{g_ip_t(i,j)},\qquad\forall\ g^h\in\ell^1,
    \end{equation*}
    and the semi-group $P_t:\ell^\infty\to\ell^\infty$ generated by $\mathcal{L}_h$ is well-posed,
    \begin{equation*}
        \left(P_tf^h\right)_i=\sum_{j\in\mathbb{Z}}{f_jp_t(i,j)},\qquad\forall\ f^h\in\ell^\infty.
    \end{equation*}
    Furthermore, the semi-group $P_t$ enjoys the following properties:
    \begin{itemize}
        \item maximum principle: $\|P_tf^h\|_{\ell^\infty}\leq\|f^h\|_{\ell^\infty}$;
        \item ergodicity: $\lim_{t\to+\infty}P_tf^h=\langle f^h,\pi^h\rangle$;
        \item $\ell^2(\pi^h)$-convergence: $\lim_{t\to+\infty}{\langle(P_tf^h-\langle f^h,\pi^h\rangle)^2,\pi^h\rangle}=0$.
    \end{itemize}
\end{proposition}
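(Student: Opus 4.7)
The plan is to reorganize the arguments developed in the preceding discussion into a formal proof, proceeding from the underlying transition kernel $p_t(i,j)$ up to the two dual semigroups.

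First I would invoke the Liggett-theorem results already cited in the text: the irreducibility of $Z_t$ (immediate from $Q(i,i\pm 1) > 0$), conservation of total probability, existence and uniqueness of $p_t(i,j)$ as the solution to both the forward equation \eqref{GFWE} and the backward equation \eqref{GBWE}, and the convergence $p_t(i,j) \to \pi_j$, which follows once detailed balance \eqref{DB} has identified $\pi^h$ as the unique stationary distribution. These inputs are entirely classical and I would simply quote them.

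Next, for $P_t : \ell^\infty \to \ell^\infty$, I would define $(P_t f^h)_i := \sum_j f_j p_t(i,j)$; absolute convergence is immediate from $\sum_j p_t(i,j) = 1$, and the probabilistic representation yields the maximum principle $\|P_t f^h\|_{\ell^\infty} \leq \|f^h\|_{\ell^\infty}$ at once. To show that $P_t f^h$ solves the discrete backward equation \eqref{DBWE}, I would combine Fubini's theorem with the dominated convergence theorem to differentiate term by term, using \eqref{GBWE} for $p_t(i,j)$. The majorant required here only involves the rates $\alpha_i + \beta_i$ at a \emph{fixed} index $i$, so the unboundedness of rates as $|i| \to \infty$ causes no difficulty. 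The uniqueness clause for solutions of \eqref{DBWE}, inherited from uniqueness of the transition kernel, then identifies $\mathcal{L}_h$ as the generator of $P_t$.

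For ergodicity and $\ell^2(\pi^h)$-convergence the main obstacle is justifying the exchange of $\lim_{t \to +\infty}$ and the infinite sum defining $P_t f^h$. The cleanest route is to notice that, for fixed $i$, the ratio $p_t(i,j)/\pi_j$ satisfies the backward equation \eqref{DBWE} in the variable $j$ with initial datum $\delta_{ij}/\pi_i$, so the maximum principle just proved yields $p_t(i,j)/\pi_j \leq 1/\pi_i$. This supplies the integrable majorant $|f_j|\pi_j/\pi_i$, and dominated convergence produces the pointwise limit $\langle f^h, \pi^h \rangle$. The $\ell^2(\pi^h)$-convergence then follows because $(P_t f^h - \langle f^h, \pi^h \rangle)^2 \leq 4\|f^h\|_{\ell^\infty}^2$ uniformly in $t$ and $\pi^h$ is a probability measure, so a further application of dominated convergence closes the argument.

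Finally, I would construct $P_t^* : \ell^1 \to \ell^1$ by duality. The paired sum $\langle P_t f^h, g^h \rangle$ rearranges absolutely because $g^h \in \ell^1$ and $p_t(i,j) \leq 1$, which gives the explicit formula $(P_t^* g^h)_j = \sum_i g_i p_t(i,j)$ together with the bound $\|P_t^* g^h\|_{\ell^1} \leq \|g^h\|_{\ell^1}$. That $P_t^* g^h$ satisfies \eqref{DFPE} with generator $\mathcal{L}_h^*$ is then obtained by the same Fubini plus dominated convergence argument used for $P_t$, but invoking the forward equation \eqref{GFWE} in place of the backward one.
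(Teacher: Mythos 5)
Your proposal is correct and follows essentially the same route as the paper: quote the Liggett results for $p_t(i,j)$, define $P_tf^h$ by the kernel and verify \eqref{DBWE} via Fubini and dominated convergence with the fixed-index majorant $\alpha_i+\beta_i$, obtain ergodicity from the bound $p_t(i,j)/\pi_j\leq 1/\pi_i$ (which the paper also derives by noting that $p_t(i,\cdot)/\pi_\cdot$ solves the backward equation and applying the maximum principle), and construct $P_t^*$ by duality with the forward equation \eqref{GFWE}. No gaps; the only additions (the explicit $\ell^1$ contraction bound) are harmless refinements of the same argument.
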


\section{Discrete Poincar\'e inequality by $\Gamma$-calculus}\label{S2}
In this section, we introduce our first approach to establish the discrete Poincar\'e inequality through $\Gamma$-Calculus and \textit{carr\'e du champ} operator \eqref{G}. Define the iteration of the \textit{carr\'e du champ} operator as
\begin{equation*}
    \Gamma^h_2(f_i,g_i):=\frac{1}{2}\left(\mathcal{L}_h\Gamma^h(f_i,g_i)-\Gamma^h(f_i,\mathcal{L}_hg_i)-\Gamma^h(\mathcal{L}_hf_i,g_i)\right).
\end{equation*}
Following the idea in \cite{baudoin2017bakry}, one can verify the following properties by definition.

\begin{proposition}\label{P1}
    Let $P_t$ be the semi-group generated by $\mathcal{L}_h$. Then for any $f^h\in\ell^2(\pi^h)$ and $0<t<T$,
    \begin{equation*}
        \frac{d}{dt}P_t\left[(P_{T-t}f^h)^2\right]=2P_t\left[\Gamma^h(P_{T-t}f^h,P_{T-t}f^h)\right],
    \end{equation*}
    and
    \begin{equation*}
        \frac{d}{dt}P_t\left[\Gamma^h(P_{T-t}f^h,P_{T-t}f^h)\right]=2P_t\left[\Gamma^h_2(P_{T-t}f^h,P_{T-t}f^h)\right].
    \end{equation*}
\end{proposition}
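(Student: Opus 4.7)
The plan is a standard $\Gamma$-calculus manipulation: differentiate the composition of the forward semigroup $P_t$ with the time-reversed quantity $P_{T-t}f^h$ by the two-variable chain rule, so that each identity becomes a commutator between $\mathcal{L}_h$ and the relevant nonlinear functional (squaring for the first identity, $\Gamma^h$ for the second), which is then collapsed using the very definitions of $\Gamma^h$ and $\Gamma^h_2$. I will first work under the simplifying assumption $f^h\in\ell^\infty$, where Proposition \ref{ptf} and Lemma \ref{L} provide the required bounds, and then extend to $\ell^2(\pi^h)$ by density.

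For the first identity, setting $g_s:=P_{T-s}f^h$ and viewing $F(t,s):=P_t[g_s^2]$ as a function of two variables, the total derivative $\partial_t F|_{s=t}+\partial_s F|_{s=t}$ reduces, via $\partial_t P_t = P_t\mathcal{L}_h$ and $\partial_s g_s = -\mathcal{L}_h g_s$, to $P_t\bigl[\mathcal{L}_h(g_t^2)-2g_t\mathcal{L}_h g_t\bigr]$, which is exactly $2P_t[\Gamma^h(g_t,g_t)]$ by the definition of $\Gamma^h$. The second identity is the analogue with $g_s^2$ replaced by $\psi_s:=\Gamma^h(g_s,g_s)$; the $s$-derivative uses the bilinearity and symmetry of $\Gamma^h$ to produce $\partial_s\psi_s = -2\Gamma^h(\mathcal{L}_h g_s, g_s)$, and the bracket $\mathcal{L}_h\Gamma^h(g_t,g_t)-2\Gamma^h(g_t,\mathcal{L}_h g_t)$ collapses into $2\Gamma^h_2(g_t,g_t)$ by the definition of $\Gamma^h_2$.

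The main obstacle lies on the regularity side. Even for $f^h\in\ell^\infty$, the iterate $\mathcal{L}_h g_t$ is no longer bounded (since $\alpha_i+\beta_i$ is unbounded), and $\Gamma^h(g_t,g_t)$, $\Gamma^h_2(g_t,g_t)$ involve products of $\alpha_i,\beta_i$; one must check that the infinite sums defining $P_t$ of these objects converge and can be differentiated termwise. The strategy is to exploit $\|g_s\|_{\ell^\infty}\leq\|f^h\|_{\ell^\infty}$ to dominate $|\Delta^\pm g_s|\leq 2\|f^h\|_{\ell^\infty}$, to control $\Gamma^h(g_s,g_s)$ pointwise by $C(\alpha_i+\beta_i)$, and to apply Lemma \ref{L} together with the a priori bound $p_t(i,j)/\pi_j\leq 1/\pi_i$ established in Section \ref{S1} as a uniform dominating function, just as in the derivation of the forward equation for $P^*_t$ there. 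Higher-order terms such as $\Gamma^h_2(g_s,g_s)$ are treated the same way, now using the polynomial-in-$i$ growth of $\alpha_i+\beta_i$ inherited from Assumption \ref{A0} and the Lipschitz continuity of $B$ in Assumption \ref{ass:B}. Finally, to pass from $\ell^\infty$ to $\ell^2(\pi^h)$, I will use that $\mathcal{L}_h$ is self-adjoint with respect to $\pi^h$, so that $P_t$ extends to a strongly continuous contraction semigroup on $\ell^2(\pi^h)$, and a density argument transfers both identities to the general case.
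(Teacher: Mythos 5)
Your computation is exactly the one the paper intends: Proposition \ref{P1} is left unproved there ("one can verify ... by definition", following Baudoin), and the intended verification is precisely your chain-rule/interpolation argument, with the termwise differentiation justified as in Section \ref{S1} via Fubini, dominated convergence, the bound $p_t(i,j)\le\pi_j/\pi_i$, and Lemma \ref{L}. One small correction to your domination step: Assumption \ref{A0} does not give polynomial-in-$i$ growth of $\alpha_i+\beta_i$ (it only bounds $u$ from below, so $u$, and hence $\beta_i$ for $i\to+\infty$, may grow superpolynomially); the summability you actually need for the $\Gamma^h_2$ terms, e.g. $\sum_{j}(\alpha_j+\beta_j)^2\pi_j<+\infty$, follows instead from the Lipschitz bound $B(s)\le 1+L|s|$ together with the at-least-quadratic growth of $u$ (hence the decay of $\pi^h$) and the boundedness of $\alpha_i$ as $i\to+\infty$ and of $\beta_i$ as $i\to-\infty$.
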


For simplicity, we adopt the componentwise notation where $f^h\geq g^h$ means $f_i\geq g_i$ for all $i\in\mathbb{N}$, and $f^hg^h$ denotes the pointwise product $(f^hg^h)_i=f_ig_i$. Then we can introduce the curvature condition in Lemma \ref{DCC}, and we give the proof in subsection \ref{S2.2}. In the continuous setting, by Bakry-\'Emery criterion we denote $\Gamma_2(f,f)\geq\lambda\Gamma(f,f)$ for some constant $\lambda>0$, with the largest $\lambda$ called the Bakry-\'Emery curvature.

\begin{lemma}\label{DCC}
    Suppose that $B(s)$ satisfies the properties in Assumption \ref{ass:B} and is also convex. If the potential satisfies $\partial_{xx}u\geq\lambda$ for some constant $\lambda>0$, then there exists a constant $\widetilde{\lambda}>0$ such that the following inequality holds for all $f^h\in\ell^2(\pi^h)$,
    \begin{equation}\label{CC}
        \Gamma^h_2(f^h,f^h)\geq\widetilde{\lambda}\Gamma^h(f^h,f^h).
    \end{equation}
\end{lemma}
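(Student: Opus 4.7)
The plan is to mimic in the discrete setting the classical Bakry-\'Emery argument: compute $\Gamma_2^h(f^h,f^h)_i$ explicitly, isolate a non-negative ``discrete Hessian squared'' piece, and control what remains by the convexity of $u$. Unfolding $\Gamma_2^h(f^h,f^h)_i = \tfrac{1}{2}[\mathcal{L}_h\Gamma^h(f^h,f^h)_i - 2\Gamma^h(f^h, \mathcal{L}_h f^h)_i]$ and using the tridiagonal form of $\mathcal{L}_h$ produces a quadratic form in the four consecutive differences $a := \Delta^+ f_{i+1}$, $b := \Delta^+ f_i$, $c := \Delta^- f_i$, $d := \Delta^- f_{i-1}$, with coefficients polynomial in $\alpha_{i-1},\alpha_i,\alpha_{i+1},\beta_{i-1},\beta_i,\beta_{i+1}$. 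By completing three squares, I would reorganise it as
\begin{equation*}
2\Gamma_2^h(f^h,f^h)_i = \tfrac{1}{2}\alpha_i\alpha_{i+1}(a-b)^2 + \tfrac{1}{2}\beta_i\beta_{i-1}(d-c)^2 + \alpha_i\beta_i(b-c)^2 + \tfrac{\alpha_i}{2}\mathcal{K}^+_i\,b^2 + \tfrac{\beta_i}{2}\mathcal{K}^-_i\,c^2,
\end{equation*}
where the \emph{discrete curvatures} are $\mathcal{K}^+_i := 3(\beta_{i+1}-\beta_i) + (\alpha_i-\alpha_{i+1})$ and $\mathcal{K}^-_i := 3(\alpha_{i-1}-\alpha_i) + (\beta_i-\beta_{i-1})$. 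The first three terms are squared discrete second differences of $f$ at neighbouring nodes, hence manifestly non-negative; a routine Taylor expansion using $B'(0)=-\tfrac{1}{2}$ (forced by $\ln B(-s)-\ln B(s)=s$ and $B(0)=1$) shows that $\mathcal{K}^\pm_i \to 2\partial_{xx}u$ as $h \to 0$, confirming that they are the correct discrete Bakry-\'Emery curvatures.

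The technical core will be the uniform lower bound $\mathcal{K}^\pm_i \geq 2\widetilde{\lambda}$. Setting $r_j := u_{j+1}-u_j$, so that $\alpha_j = h^{-2}B(r_j)$ and $\beta_{j+1} = h^{-2}B(-r_j)$, the hypothesis $\partial_{xx}u \geq \lambda$ gives $r_{j+1}-r_j = u_{j+2}-2u_{j+1}+u_j \geq \lambda h^2$, so the sequence $(r_j)_{j\in\mathbb{Z}}$ is strictly increasing with gap at least $\lambda h^2$. The convexity of $B$ delivers the tangent-line bound $B(x) \geq B(y)+B'(y)(x-y)$, which combined with the monotonicity $B' \leq 0$ yields $\beta_{i+1}-\beta_i \geq \lambda\,|B'(-r_{i-1})|$ and $\alpha_i-\alpha_{i+1} \geq \lambda\,|B'(r_{i+1})|$, and symmetrically for the two differences in $\mathcal{K}^-_i$. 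Because $B'$ is non-decreasing (from convexity of $B$) with $B'(0)=-\tfrac{1}{2}$, one has $|B'(s)| \geq \tfrac{1}{2}$ whenever $s \leq 0$. A case analysis on the signs of $r_{i-1},r_i,r_{i+1}$ then forces at least one summand in each $\mathcal{K}^\pm_i$ to be bounded below by a positive multiple of $\lambda$; in the ``transition'' regime where $r_{i-1}$ and $r_{i+1}$ straddle zero, the sharper integral estimate
\begin{equation*}
\alpha_j - \alpha_{j+1} = h^{-2}\!\int_{r_j}^{r_{j+1}}\!|B'(s)|\,ds \geq \tfrac{1}{2h^2}\bigl[(r_{j+1}\wedge 0)-(r_j\wedge 0)\bigr]
\end{equation*}
(and its analogue for $\beta_{j+1}-\beta_j$) extracts a positive contribution from the portion of the integration range lying in $(-\infty,0]$.

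Once $\mathcal{K}^\pm_i \geq 2\widetilde{\lambda}$ is established, discarding the three non-negative second-difference squares gives $2\Gamma_2^h(f^h,f^h)_i \geq \widetilde{\lambda}(\alpha_i b^2 + \beta_i c^2) = 2\widetilde{\lambda}\Gamma^h(f^h,f^h)_i$, which is exactly \eqref{CC}. The main obstacle will be securing the uniformity of $\widetilde{\lambda}$ in the transition regime: the tangent-line bound via $|B'|$ can degrade there because $|B'|$ may vanish at $\pm\infty$ for admissible $B$ (e.g.\ the Scharfetter-Gummel choice), so the argument must combine the sign-based case analysis with the sharper integral estimate above and, if the curvature coefficients $\mathcal{K}^\pm_i$ alone still fall short, exploit the reserve term $\alpha_i\beta_i(b-c)^2$ in the decomposition via a Schur-complement inspection of the quadratic form in $(b,c)$ to close the gap.
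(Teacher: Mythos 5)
Your expansion of $2\Gamma^h_2(f^h,f^h)_i$ into three non-negative squared second differences plus $\tfrac{\alpha_i}{2}\mathcal{K}^+_i b^2+\tfrac{\beta_i}{2}\mathcal{K}^-_i c^2$ is exactly the paper's Lemma \ref{G2}, and your overall strategy (drop the squares, bound the curvature coefficients $\mathcal{K}^\pm_i=3\Delta^+\beta_i-\Delta^+\alpha_i$, $\Delta^-\beta_i-3\Delta^-\alpha_i$ below uniformly) is also the paper's; your sign-based case analysis using $|B'(s)|\geq\tfrac12$ for $s\leq0$ reproduces the paper's treatment of the region where the increments of $u$ straddle zero in the right way. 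The genuine gap is precisely the transition regime $r_{i-1}<0<r_{i+1}$ that you flag, because neither of your proposed remedies closes it. The integral estimate that retains only the part of $\int_{r_j}^{r_{j+1}}|B'(s)|\,ds$ lying in $(-\infty,0]$ gives, for $\mathcal{K}^+_i$, lower bounds proportional to $\max(r_i,0)$ and $\max(-r_i,0)$, both of which vanish when $r_i=0$ (which does occur, e.g.\ when the grid straddles the minimizer of $u$ symmetrically), so it only yields $\mathcal{K}^+_i\geq0$. The Schur-complement fallback cannot help either: the reserve term $\alpha_i\beta_i(b-c)^2$ vanishes identically on locally affine $f$ (take $a=b=c=d$), for which $2\Gamma^h_2=\bigl(\tfrac{\alpha_i}{2}\mathcal{K}^+_i+\tfrac{\beta_i}{2}\mathcal{K}^-_i\bigr)b^2$ while $2\Gamma^h=(\alpha_i+\beta_i)b^2$; no off-diagonal term can compensate in that direction, so a strictly positive bound on the curvature coefficients at the transition indices is unavoidable.

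What closes the gap is a locality/compactness observation, which is in essence how the paper handles its analogous ``inside $B(0,R)$'' case. If $r_{i-1}<0<r_{i+1}$, then $u'$ changes sign on $[x_{i-1},x_{i+2}]$, so these nodes lie within $3h$ of the unique minimizer $x_*$ of the strongly convex $u$; hence $|r_{i-1}|,|r_i|,|r_{i+1}|\leq h\sup_{|x-x_*|\leq 3h_0}|u'|=:C_u h$ for $h\leq h_0$, and the arguments fed to $B'$ in your tangent-line bounds stay in the fixed compact interval $[-C_u h_0,C_u h_0]$. Moreover $B'<0$ everywhere: if $B'(s_0)=0$, convexity and monotonicity force $B$ to be constant on $[s_0,\infty)$, and then $B(-s)=B(s)e^s$ contradicts the Lipschitz assumption. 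Thus $|B'|\geq\epsilon_0>0$ on that interval, and your own tangent-line estimate already gives $\mathcal{K}^\pm_i\geq4\lambda\epsilon_0$ at the transition indices; combined with $\mathcal{K}^\pm_i\geq\lambda/2$ in the other cases this produces a uniform $\widetilde{\lambda}$ (independent of $h$, though, as in the paper's compactness step, one should fix an upper bound $h\leq h_0$ for the argument).
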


Combining Proposition \ref{P1} with Lemma \ref{DCC}, by Gr\"ownwall's inequality we can obtain an exponential convergence to zero for the time derivative of $P_t[(P_{T-t}f^h)^2]$, which immediately implies the exponential convergence of $P_t[(P_{T-t}f^h)^2]$ itself. We leave the details in subsection \ref{S2.1} and give the following result
\begin{equation}\label{PI1}
    \left\langle(f^h)^2,\pi^h\right\rangle-\left\langle f^h,\pi^h\right\rangle^2\leq\frac{1}{\widetilde{\lambda}}\left\langle\Gamma^h(f^h,f^h),\pi^h\right\rangle.
\end{equation}
where $f^h\in\mathcal{H}^1(\pi^h)$. And we have established Theorem \ref{T1}.

\begin{remark}
    The discrete Poincar\'e inequality \eqref{PI1} and the curvature condition \eqref{CC} share the same constant $\widetilde{\lambda}$, which is consistent with continuous cases.
\end{remark}

\begin{remark}
    The analysis in \cite{li2020large} requires the technical assumption $\sup_{s\in\mathbb{R}} B(s) < +\infty$, leading to a modification on the Scharfetter-Gummel scheme to fit their theoretical framework. Our result removes the boundedness constraint on $B(s)$, and directly applies to the original Scharfetter-Gummel scheme while preserving the convergence properties.
\end{remark}

\subsection{Proof of Theorem \ref{T1}}\label{S2.1}
In this subsection, we derive Theorem \ref{T1} through Proposition \ref{P1} and Lemma \ref{DCC}, by which one can have
\begin{equation*}
    \frac{d}{dt}P_t\left[\Gamma^h(P_{T-t}f^h,P_{T-t}f^h)\right]=P_t\left[\Gamma^h_2(P_{T-t}f^h,P_{T-t}f^h)\right]\geq2\widetilde{\lambda}P_t\left[\Gamma^h(P_{T-t}f^h,P_{T-t}f^h)\right].
\end{equation*}
By Gr\"onwall's inequality,
\begin{equation}\label{CoG}
    \Gamma^h(P_Tf^h,P_Tf^h)\leq e^{-2\widetilde{\lambda}T}P_T\left[\Gamma^h(f^h,f^h)\right].
\end{equation}
Since the above inequality holds for all $T>0$, by the monotonicity of semi-group $P_t$ one can obtain
\begin{equation*}
    \frac{d}{dt}P_t\left[(P_{T-t}f^h)^2\right]=2P_t\left[\Gamma^h(P_{T-t}f^h,P_{T-t}f^h)\right]\leq2P_t\left[e^{-2\widetilde{\lambda}(T-t)}P_{T-t}\Gamma^h(f^h,f^h)\right].
\end{equation*}
Note that $P_t[(P_{T-t}f^h)^2]|_{t=0}=(P_Tf^h)^2$ and $P_t[(P_{T-t}f^h)^2]|_{t=T}=P_T[(f^h)^2]$, by the integral inequality one has
\begin{equation*}
    P_T\left[(f^h)^2\right]-\left(P_Tf^h\right)^2\leq2\int^T_0{P_t\left[e^{-2\widetilde{\lambda}(T-t)}P_{T-t}\Gamma^h(f^h,f^h)\right]}dt.
\end{equation*}
Recall that $\pi^h$ is the invariant distribution with respect to $P_t$. Then
\begin{equation*}
    \left\langle P_T\left[(f^h)^2\right]-\left(P_Tf^h\right)^2,\pi^h\right\rangle=\left\langle(f^h)^2,\pi^h\right\rangle-\left\langle(P_Tf^h)^2,\pi^h\right\rangle,
\end{equation*}
and by Fubini's theorem,
\begin{align*}
    \left\langle2\int^T_0{P_t\left[e^{-2\widetilde{\lambda}(T-t)}P_{T-t}\Gamma^h(f^h,f^h)\right]}dt,\pi^h\right\rangle
    &=\int^T_0{\left\langle 2e^{-2\widetilde{\lambda}(T-t)}P_{T-t}\Gamma^h(f^h,f^h),\pi^h\right\rangle}dt\\
    &=\int^T_0{2e^{-2\widetilde{\lambda}(T-t)}\left\langle \Gamma^h(f^h,f^h),\pi^h\right\rangle}dt\\
    &=\frac{1}{\widetilde{\lambda}}\left(1-e^{-2\widetilde{\lambda}T}\right)\left\langle\Gamma^h(f^h,f^h),\pi^h\right\rangle.
\end{align*}
Hence, for arbitrary $T>0$ it holds that
\begin{equation}
    \left\langle(f^h)^2,\pi^h\right\rangle-\left\langle(P_Tf^h)^2,\pi^h\right\rangle\leq\frac{1}{\widetilde{\lambda}}\left(1-e^{-2\widetilde{\lambda}T}\right)\left\langle\Gamma^h(f^h,f^h),\pi^h\right\rangle.
\end{equation}
Let $T\to+\infty$ and one can obtain
\begin{equation*}
    \left\langle(f^h)^2,\pi^h\right\rangle-\lim_{T\to+\infty}{\left\langle(P_Tf^h)^2,\pi^h\right\rangle}\leq\frac{1}{\widetilde{\lambda}}\left\langle\Gamma^h(f^h,f^h),\pi^h\right\rangle.
\end{equation*}

Since $\ell^\infty$ is dense in $f^h\in\mathcal{H}^1(\pi^h)$, the discrete Pioncar\'e inequality for $f^h\in\mathcal{H}^1(\pi^h)$ follows if we can establish the inequality \eqref{GDPI} for all $f^h\in\ell^\infty$. By Proposition \ref{ptf} one can have
\begin{equation*}
    \lim_{T\to+\infty}{\left\langle\left(P_Tf^h\right)^2,\pi^h\right\rangle}=\left\langle f^h,\pi^h\right\rangle^2,
\end{equation*}
that is, for all $f^h\in\ell^\infty$ it holds that
\begin{equation*}
    \left\langle(f^h)^2,\pi^h\right\rangle-\left\langle f^h,\pi^h\right\rangle^2\leq\frac{1}{\widetilde{\lambda}}\left\langle\Gamma^h(f^h,f^h),\pi^h\right\rangle.
\end{equation*}

When $f^h\in\mathcal{H}^1(\pi^h)$, one can set $(f^h_k)_i:=f_i$ for $|i|\leq k$ and $(f^h_k)_i=0$ for $|i|>k$. Then $f^h_k\in\ell^\infty$ for all $k\in\mathbb{N}^+$, and thus $f^h_k$ satisfies the Poincar\'e inequality \eqref{GDPI}. Since $f^h\in\mathcal{H}^1(\pi^h)$, it holds that
\begin{equation*}
    \lim_{k\to+\infty}{\left\langle(f^h_k)^2,\pi^h\right\rangle}=\left\langle(f^h)^2,\pi^h\right\rangle-\lim_{k\to+\infty}{\sum_{|i|>k}{f^2_i\pi_i}}=\left\langle(f^h)^2,\pi^h\right\rangle,
\end{equation*}
as well as
\begin{align*}
    \lim_{k\to+\infty}{\left\langle\Gamma^h(f^h_k,f^h_k),\pi^h\right\rangle}
    &=\left\langle\Gamma^h(f^h,f^h),\pi^h\right\rangle-\lim_{k\to+\infty}{\frac{1}{2}\sum_{|i|>k}{\left(\alpha_i(\Delta^+f_i)^2+\beta_i(\Delta^-f_i)^2\right)\pi_i}}\\
    &=\left\langle\Gamma^h(f^h,f^h),\pi^h\right\rangle.
\end{align*}
By H\"older's inequality,
\begin{equation*}
    \lim_{k\to+\infty}\left|{\sum_{|i|>k}{f_i\pi_i}}\right|\leq\lim_{k\to+\infty}{\left(\sum_{|i|>k}{f^2_i\pi_i}\right)^{\frac{1}{2}}\left(\sum_{|i|>l}1^2\pi_i\right)^{\frac{1}{2}}}=0,
\end{equation*}
and thus
\begin{equation*}
    \lim_{k\to+\infty}{\left\langle f^h_k,\pi^h\right\rangle}=\left\langle f^h,\pi^h\right\rangle-\lim_{k\to+\infty}{\sum_{|i|>k}{f_i\pi_i}}=\left\langle f^h,\pi^h\right\rangle.
\end{equation*}
Therefore, we can establish the Poincar\'e inequality \eqref{GDPI} for all $f^h\in\mathcal{H}^1(\pi^h)$ with constant $\kappa=\widetilde{\lambda}$.

\subsection{Curvature condition}\label{S2.2}
In this subsection we aim to obtain the curvature condition \eqref{CC}. By definition we can get the following result, and we leave the proof to subsection \ref{S2.3}.
\begin{lemma}\label{G2}
    For $f^h,g^h\in\ell^2(\pi^h)$, it holds that
    \begin{align*}
        \left(\Gamma^h_2(f^h,g^h)\right)_i
        &=\frac{1}{4}\alpha_i\left(3\Delta^+\beta_i-\Delta^+\alpha_i\right)\Delta^+f_i\Delta^+g_i+\frac{1}{4}\beta_i(\Delta^-\beta_i-3\Delta^-\alpha_i)\Delta^-f_i\Delta^-g_i\\
        &\quad+\frac{1}{4}\alpha_i\alpha_{i+1}\Delta^+\Delta^+f_i\Delta^+\Delta^+g_i+\frac{1}{4}\beta_i\beta_{i-1}\Delta^-\Delta^-f_i\Delta^-\Delta^-g_i\\
        &\quad+\frac{1}{2}\alpha_i\beta_i\Delta^+\Delta^-f_i\Delta^+\Delta^-g_i,
    \end{align*}
\end{lemma}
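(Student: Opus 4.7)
The identity is a direct algebraic consequence of the definitions of $\mathcal{L}_h$, $\Gamma^h$, and $\Gamma^h_2$, so my plan is an organized computation rather than a conceptual argument. Since both sides of the claimed formula are symmetric bilinear forms in $(f^h, g^h)$ (the symmetry of the RHS is visible term by term, and on the LHS it follows from the symmetry of $\Gamma^h$ together with the expression for $\Gamma^h_2$), I would first invoke polarization and verify the identity only in the diagonal case $f^h = g^h$. This collapses every mixed product into a square and reduces the problem to matching five scalar coefficients, in front of $(\Delta^+ f_i)^2$, $(\Delta^- f_i)^2$, $(\Delta^+\Delta^+ f_i)^2$, $(\Delta^-\Delta^- f_i)^2$, and $(\Delta^+\Delta^- f_i)^2$.

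The computation splits into three pieces corresponding to the three terms in the definition of $\Gamma^h_2$. For $\mathcal{L}_h \Gamma^h(f,f)_i$, I would substitute $\Gamma^h(f,f)_j = \tfrac12 \alpha_j (\Delta^+ f_j)^2 + \tfrac12 \beta_j (\Delta^- f_j)^2$ at $j \in \{i-1,i,i+1\}$ and apply $\mathcal{L}_h$ in the $i$-index, producing in particular the terms $\alpha_i \alpha_{i+1} (\Delta^+ f_{i+1})^2$ and $\beta_i \beta_{i-1} (\Delta^- f_{i-1})^2$ which will eventually carry the pure second-difference squares. For $\Gamma^h(f, \mathcal{L}_h f)_i$, and by symmetry $\Gamma^h(\mathcal{L}_h f, f)_i$, I would write $(\mathcal{L}_h f)_j = \alpha_j \Delta^+ f_j - \beta_j \Delta^- f_j$ and use the discrete Leibniz rules $\Delta^+(a_i b_i) = a_{i+1}\Delta^+ b_i + b_i \Delta^+ a_i$ and $\Delta^-(a_i b_i) = a_{i-1}\Delta^- b_i + b_i \Delta^- a_i$ to expand $\Delta^\pm(\alpha_i \Delta^+ f_i)$ and $\Delta^\pm(\beta_i \Delta^- f_i)$. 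The commutation $\Delta^+\Delta^- f_i = \Delta^-\Delta^+ f_i$ would be used freely to group the mixed second differences.

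With all expansions in hand, I would group terms by the five target monomials. The second-difference squares are the cleanest: $(\Delta^+\Delta^+ f_i)^2$ appears only after rewriting $\Delta^+ f_{i+1} = \Delta^+ f_i + \Delta^+\Delta^+ f_i$ inside the $\alpha_i\alpha_{i+1}$ contribution from $\mathcal{L}_h\Gamma^h$, and $(\Delta^-\Delta^- f_i)^2$ arises analogously from the $\beta_i\beta_{i-1}$ contribution using $\Delta^- f_{i-1} = \Delta^- f_i - \Delta^-\Delta^- f_i$; the cross square $(\Delta^+\Delta^- f_i)^2$ is produced by the $\alpha_i\beta_i$ interaction inside the combined $\Gamma^h(f,\mathcal{L}_h f)_i + \Gamma^h(\mathcal{L}_h f, f)_i$ piece.

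The main obstacle is bookkeeping in the coefficients of the first-difference squares $(\Delta^+ f_i)^2$ and $(\Delta^- f_i)^2$, which receive residual contributions from essentially every term in the three expansions. The particular combinations $3\Delta^+ \beta_i - \Delta^+ \alpha_i$ and $\Delta^- \beta_i - 3\Delta^- \alpha_i$ emerge from partial cancellation between the Leibniz byproducts in $\Gamma^h(f,\mathcal{L}_h f)$ and the shifted coefficient differences in $\mathcal{L}_h\Gamma^h(f,f)$, so a single sign or index slip silently redistributes them. To guard against errors I would check the first-difference block independently by substituting the affine test vector $f_i = i$ (which annihilates every second difference and reduces both sides to a short expression in $\alpha_i\Delta^\pm\alpha_i$, $\alpha_i\Delta^\pm\beta_i$, etc.), and then check the second-difference block using $f_i = i^2$. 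Once the diagonal identity holds, the bilinear form in the statement is recovered by polarization.
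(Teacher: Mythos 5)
Your plan is correct and is essentially the paper's own proof: a direct expansion of $\mathcal{L}_h\Gamma^h(f,g)$, $\Gamma^h(f,\mathcal{L}_h g)$ and $\Gamma^h(\mathcal{L}_h f,g)$ via the discrete product rules, with the shifts $\Delta^+f_{i+1}=\Delta^+f_i+\Delta^+\Delta^+f_i$, $\Delta^-f_{i-1}=\Delta^-f_i-\Delta^-\Delta^-f_i$, your polarization step being only a cosmetic reduction to the diagonal case. One caution: the five squares you propose to match are not linearly independent --- $(\Delta^+\Delta^-f_i)^2=(\Delta^+f_i)^2-2\,\Delta^+f_i\Delta^-f_i+(\Delta^-f_i)^2$, and the mixed monomial $\Delta^+f_i\Delta^-f_i$ genuinely occurs in the expansion (with total coefficient $-\alpha_i\beta_i$), while transient products such as $\Delta^+f_i\,\Delta^+\Delta^+f_i$ must be seen to cancel --- so the final bookkeeping has to be done in an independent family of monomials (products of $\Delta^+f_i$, $\Delta^-f_i$, $\Delta^+\Delta^+f_i$, $\Delta^-\Delta^-f_i$), a point your test-vector checks with $f_i=i$ and $f_i=i^2$ would not by themselves certify.
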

In particular, for all $f^h\in\ell^2(\pi^h)$ it holds that
\begin{equation*}
    \left(\Gamma^h(f^h,f^h)\right)_i=\frac{h}{2}\alpha_i(\Delta^+f_i)^2+\frac{h}{2}\beta_i(\Delta^-f_i)^2,
\end{equation*}
and
\begin{equation}
    \left(\Gamma^h_2(f^h,f^h)\right)_i\geq \frac{1}{4}\alpha_i(3\Delta^+\beta_i-\Delta^+\alpha_i)(\Delta^+f_i)^2+\frac{1}{4}\beta_i(\Delta^-\beta_i-3\Delta^-\alpha_i)(\Delta^-f_i)^2.
\end{equation}
If we can obtain a uniform positive lower bound for both $3\Delta^+\beta_i-\Delta^+\alpha_i$ and $\Delta^-\beta_i-3\Delta^-\alpha_i$, then the curvature condition is satisfied. We define the coefficient function as
\begin{equation*}
    \alpha(s)=\frac{1}{h^2}B\left(u(s+h)-u(s)\right).
\end{equation*}
Then $\alpha(ih)=\alpha_i$ for all $i\in\mathbb{Z}$, and this definition is consistent with \eqref{C}. It follows that
\begin{align*}
    \alpha'(s)
    &=\frac{1}{h^2}B'\left(u(s+h)-u(s)\right)\left(u'(s+h)-u'(s)\right)\\
    &=\frac{1}{h}B'\left(u(s+h)-u(s)\right)\int^1_0{u''(s+wh)}dw,
\end{align*}
and
\begin{align*}
    \alpha(ih+h)-\alpha(ih)
    &=h\int^1_0{\alpha'(ih+vh)}dv\\
    &=\int^1_0{\int^1_0{B'\left(u(ih+vh+h)-u(ih+vh)\right)u''(ih+vh+wh)}dv}dw.
\end{align*}
In fact by simple calculation one can obtain the following assertions
\begin{equation*}
    \left\{\begin{aligned}
        &\Delta^+\alpha_i=\int^1_0{\int^1_0{B'\left(u(ih+vh+h)-u(ih+vh)\right)u''(ih+vh+wh)}dv}dw,\\
        &\Delta^+\beta_i=-\int^1_0{\int^1_0{B'(u(ih+vh-h)-u(ih+vh))u''(ih+vh-wh)}dv}dw,\\
        &\Delta^-\alpha_i=\int^1_0{\int^1_0{B'(u(ih-vh+h)-u(ih-vh))u''(ih-vh+wh)}dv}dw,\\
        &\Delta^-\beta_i=-\int^1_0{\int^1_0{B'(u(ih-vh-h)-u(ih-vh))u''(ih-vh-wh)}dv}dw.
    \end{aligned}
    \right.
\end{equation*}
We emphasize that $i$ is always an integer in this work. Since by assumption $u''(s)\geq\lambda$ and $B'(s)<0$ for all $s\in\mathbb{R}$, it holds that
\begin{equation*}
    \left\{\begin{aligned}
        &3\Delta^+\beta_i-\Delta^+\alpha_i\geq\\
        &\quad-\lambda\int^1_0{\left[3B'(u(ih+vh-h)-u(ih+vh))+B'(u(ih+vh+h)-u(ih+vh))\right]}dv,\\
        &\Delta^-\beta_i-3\Delta^-\alpha_i\geq\\
        &\quad-\lambda\int^1_0{\left[B'(u(ih-vh-h)-u(ih-vh))+3B'(u(ih-vh+h)-u(ih-vh))\right]}dv.
    \end{aligned}
    \right.
\end{equation*}

When $u$ is convex, there exists a ball $B(0,R)$ such that $u$ is monotone outside $B(0,R)$. Here we may abuse the
symbols, and whether $B$ denotes a function or a ball should be clear.  For sufficiently large $|i|$, the following inequalities hold for aribitrary $v\in (0,1)$
\begin{equation*}
    \left\{\begin{aligned}
        &\left[u(ih+vh-h)-u(ih+vh)\right]\left[u(ih+vh+h)-u(ih+vh)\right]\leq0,\\
        &\left[u(ih-vh+h)-u(ih-vh)\right]\left[u(ih-vh-h)-u(ih-vh)\right]\leq0.
    \end{aligned}
    \right.
\end{equation*}
Since by assumption the function $B'(s)$ is non-positive and monotonically increasing, then for arbitrary $s<0$ it holds that $B'(s)\leq B'(0)=-1/2$, that is,
\begin{equation*}
    \left\{\begin{aligned}
        &-3B'(u(ih+vh-h)-u(ih+vh))-B'(u(ih+vh+h)-u(ih+vh))\geq\frac{1}{2},\\
        &-B'(u(ih-vh-h)-u(ih-vh))-3B'(u(ih-vh+h)-u(ih-vh))\geq\frac{1}{2}.
    \end{aligned}
    \right.
\end{equation*}
Then for points outside $B(0,R)$ it holds that
\begin{equation*}
    \left(\Gamma^h_2(f^h,f^h)\right)_i\geq\frac{\lambda}{8}\left(\Gamma^h(f^h,f^h)\right)_i.
\end{equation*}
And for the rest points within $B(0,R)$, both $u(s)$ and $B'(s)$ are uniformly bounded by continuity. Hence, there exists a constant $\widetilde{\lambda}>0$ such that
\begin{equation*}
    \Gamma^h_2(f^h,f^h)\geq\widetilde{\lambda}\Gamma^h(f^h,f^h).
\end{equation*}

\begin{remark}
    The constant $\widetilde{\lambda}$ is independent of $h$. This uniformity is essential for establishing convergence results that are valid across different discretization levels.
\end{remark}

\begin{remark}
    Recall that $\lim_{s\to0}B'(s)=-1/2$ by definition. Then
    \begin{equation*}
    \left\{\begin{aligned}
        &\lim_{h\to0}{-\frac{1}{4}\left[3B'(u(ih+vh-h)-u(ih+vh))+B'(u(ih+vh+h)-u(ih+vh))\right]}=\frac{1}{2},\\
        &\lim_{h\to0}{-\frac{1}{4}\left[B'(u(ih-vh-h)-u(ih-vh))+3B'(u(ih-vh+h)-u(ih-vh))\right]}=\frac{1}{2}.
    \end{aligned}
    \right.
    \end{equation*}
    that is,
    \begin{equation*}
        \lim_{h\to0}{\left(\Gamma^h_2(f^h,f^h)\right)_i}\geq\lambda\lim_{h\to0}{\left(\Gamma^h(f^h,f^h)\right)_i},
    \end{equation*}
    which is consistent with the continuous case.
\end{remark}

\subsection{Calculation of $\Gamma^h_2$}\label{S2.3}
In this subsection we derive the expression for $\Gamma^h_2(f^h,g^h)$ and give a proof to Lemma \ref{G2}. Recall that
\begin{equation*}
    \Gamma^h(f^h,g^h)=\frac{1}{2}\alpha_i\Delta^+f_i\Delta^+g_i+\frac{1}{2}\Delta^-f_i\Delta^-g_i,
\end{equation*}
and
\begin{equation*}
    \Gamma^h_2(f_i,g_i)=\frac{1}{2}\mathcal{L}_h\Gamma^h(f_i,g_i)-\frac{1}{2}\Gamma^h(f_i,\mathcal{L}_hg_i)-\frac{1}{2}\Gamma^h(\mathcal{L}_hf_i,g_i).
\end{equation*}
By definition one has
\begin{equation*}
    \left\{\begin{aligned}
        &\begin{aligned}
            \mathcal{L}_h\Gamma^h(f_i,g_i)
            &=\frac{1}{2}\alpha_i\Delta^+\left(\alpha_i\Delta^+f_i\Delta^+g_i\right)-\frac{1}{2}\beta_i\Delta^-\left(\beta_i\Delta^-f_i\Delta^-g_i\right)\\
            &\quad+\frac{1}{2}\alpha_i\Delta^+\left(\beta_i\Delta^-f_i\Delta^-g_i\right)-\frac{1}{2}\beta_i\Delta^-\left(\alpha_i\Delta^+f_i\Delta^+g_i\right),
        \end{aligned}\\
        &\begin{aligned}
            \Gamma^h(f_i,\mathcal{L}_hg_i)
            &=\frac{1}{2}\alpha_i\Delta^+f_i\Delta^+\left(\alpha_i\Delta^+g_i\right)-\frac{1}{2}\beta_i\Delta^-f_i\Delta^-\left(\beta_i\Delta^-g_i\right)\\
            &\quad-\frac{1}{2}\alpha_i\Delta^+f_i\Delta^+\left(\beta_i\Delta^-g_i\right)+\frac{1}{2}\beta_i\Delta^-f_i\Delta^-\left(\alpha_i\Delta^+g_i\right),
        \end{aligned}\\
        &\begin{aligned}
            \Gamma^h(\mathcal{L}_hf_i,g_i)
            &=\frac{1}{2}\alpha_i\Delta^+g_i\Delta^+\left(\alpha_i\Delta^+f_i\right)-\frac{1}{2}\beta_i\Delta^-g_i\Delta^-\left(\beta_i\Delta^-f_i\right)\\
            &\quad-\frac{1}{2}\alpha_i\Delta^+g_i\Delta^+\left(\beta_i\Delta^-f_i\right)+\frac{1}{2}\beta_i\Delta^-g_i\Delta^-\left(\alpha_i\Delta^+f_i\right).
        \end{aligned}
    \end{aligned}
    \right.
\end{equation*}
Then there are four distinct terms in each of the expressions $\mathcal{L}_h\Gamma^h(f_i,g_i)$, $\Gamma^h(f_i,\mathcal{L}_hg_i)$, and $\Gamma^h(\mathcal{L}_hf_i,g_i)$. Focusing on their first terms, we find
\begin{align*}
    &\alpha_i\Delta^+\left(\alpha_i\Delta^+f_i\Delta^+g_i\right)-\alpha_i\Delta^+f_i\Delta^+\left(\alpha_i\Delta^+g_i\right)-\alpha_i\Delta^+g_i\Delta^+\left(\alpha_i\Delta^+f_i\right)\\
    &\qquad=\alpha_i\alpha_{i+1}\Delta^+\left(\Delta^+f_i\Delta^+g_i\right)-\alpha_i\alpha_{i+1}\Delta^+f_i\Delta^+\Delta^+g_i-\alpha_i\alpha_{i+1}\Delta^+g_i\Delta^+\Delta^+f_i\\
    &\qquad\quad+\alpha_i\Delta^+\alpha_i\Delta^+f_i\Delta^+g_i-\alpha_i\Delta^+\alpha_i\Delta^+f_i\Delta^+g_i-\alpha_i\Delta^+g_i\Delta^+f_i.
\end{align*}
Note that $\Delta^+(f_ig_i)-f_i\Delta^+g_i-g_i\Delta^+f_i=\Delta^+f_i\Delta^+g_i$. If we replace $f_i$ by $\Delta^+f_i$ and $g_i$ by $\Delta^+g_i$, it holds that
\begin{equation*}
    \Delta^+\left(\Delta^+f_i\Delta^+g_i\right)-\Delta^+f_i\Delta^+\Delta^+g_i-\Delta^+g_i\Delta^+\Delta^+f_i=\Delta^+\Delta^+f_i\Delta^+\Delta^+g_i,
\end{equation*}
that is,
\begin{align*}
    &\alpha_i\Delta^+\left(\alpha_i\Delta^+f_i\Delta^+g_i\right)-\alpha_i\Delta^+f_i\Delta^+\left(\alpha_i\Delta^+g_i\right)-\alpha_i\Delta^+g_i\Delta^+\left(\alpha_i\Delta^+f_i\right)\\
    &\qquad=\alpha_i\alpha_{i+1}\Delta^+\Delta^+f_i\Delta^+\Delta^+g_i-\alpha_i\Delta^+\alpha_i\Delta^+f_i\Delta^+g_i.
\end{align*}
Here we can get parallel results for the other terms in the same way. Indeed one can have
\begin{equation*}
    \left\{\begin{aligned}
        &\begin{aligned}
            &\alpha_i\Delta^+\left(\alpha_i\Delta^+f_i\Delta^+g_i\right)-\alpha_i\Delta^+f_i\Delta^+\left(\alpha_i\Delta^+g_i\right)-\alpha_i\Delta^+g_i\Delta^+\left(\alpha_i\Delta^+f_i\right)\\
            &\qquad=\alpha_i\alpha_{i+1}\Delta^+\Delta^+f_i\Delta^+\Delta^+g_i-\alpha_i\Delta^+\alpha_i\Delta^+f_i\Delta^+g_i,
        \end{aligned}\\
        &\begin{aligned}
            &-\beta_i\Delta^-\left(\beta_i\Delta^-f_i\Delta^-g_i\right)+\beta_i\Delta^-f_i\Delta^-\left(\alpha_i\Delta^-g_i\right)+\beta_i\Delta^-g_i\Delta^-\left(\alpha_i\Delta^-f_i\right)\\
            &\qquad=\beta_i\beta_{i-1}\Delta^-\Delta^-f_i\Delta^-\Delta^-g_i+\beta_i\Delta^-\beta_i\Delta^-f_i\Delta^-g_i,
        \end{aligned}\\
        &\begin{aligned}
            &\alpha_i\Delta^+\left(\beta_i\Delta^-f_i\Delta^-g_i\right)+\alpha_i\Delta^+f_i\Delta^+\left(\beta_i\Delta^-g_i\right)+\alpha_i\Delta^+g_i\Delta^+\left(\beta_i\Delta^-f_i\right)\\
            &\qquad=\alpha_i\beta_i\Delta^+\Delta^-f_i\Delta^+\Delta^-g_i+3\alpha_i\Delta^+\beta_i\Delta^+f_i\Delta^+g_i,
        \end{aligned}\\
        &\begin{aligned}
            &-\beta_i\Delta^-\left(\alpha_i\Delta^+f_i\Delta^+g_i\right)-\beta_i\Delta^-f_i\Delta^-\left(\alpha_i\Delta^+g_i\right)-\beta_i\Delta^-g_i\Delta^-\left(\alpha_i\Delta^+f_i\right)\\
            &\qquad=\alpha_i\beta_i\Delta^+\Delta^-f_i\Delta^+\Delta^-g_i-3\beta_i\Delta^-\alpha_i\Delta^-f_i\Delta^-g_i.
        \end{aligned}
    \end{aligned}
    \right.
\end{equation*}
And we can obtain the expression of $\Gamma^h_2(f^h,g^h)$,
\begin{align*}
    \left(\Gamma^h_2(f^h,g^h)\right)_i
    &=\frac{1}{4}\alpha_i\left(3\Delta^+\beta_i-\Delta^+\alpha_i\right)\Delta^+f_i\Delta^+g_i+\frac{1}{4}\beta_i(\Delta^-\beta_i-3\Delta^-\alpha_i)\Delta^-f_i\Delta^-g_i\\
    &\quad+\frac{1}{4}\alpha_i\alpha_{i+1}\Delta^+\Delta^+f_i\Delta^+\Delta^+g_i+\frac{1}{4}\beta_i\beta_{i-1}\Delta^-\Delta^-f_i\Delta^-\Delta^-g_i\\
    &\quad+\frac{1}{2}\alpha_i\beta_i\Delta^+\Delta^-f_i\Delta^+\Delta^-g_i.
\end{align*}

\subsection{Corollaries of Theorem \ref{T1}}
In this subsection, we derive two corollaries of Theorem \ref{T1}. At first we establish Corollary \ref{C1}, which claims that $P_tf^h$ converges to zero in $\mathcal{H}(\pi^h)$ exponentially fast when $u$ is strongly convex,
\begin{equation*}
    \begin{aligned}
        &\left\langle\left(P_tf^h-\langle P_tf^h,\pi^h\rangle\right)^2+\Gamma^h(P_tf^h,P_tf^h),\pi^h\right\rangle\\
        &\qquad\leq e^{-\theta t}\left\langle\left(f^h-\langle f^h,\pi^h\rangle\right)^2+\Gamma^h(f^h,f^h),\pi^h\right\rangle.
    \end{aligned}
\end{equation*}
where $\theta$ is a positive constant and $f^h\in\ell^\infty$.

Having established the exponential decay of $\Gamma^h(P_tf^h,P_tf^h)$ in the proof of Theorem \ref{T1}, the result follows directly from Proposition \ref{T} combined with Gr\"onwall's inequality. Following the idea in \cite{baudoin2017bakry}, we present an alternative approach to proving the exponential convergence, which might be potentially extended to the semi-convex potential $u$.

\begin{proof}[Proof of Corollary \ref{C1}]
    By \eqref{CoG} one can obtain
    \begin{equation}\label{CoG2}
        \left\langle\Gamma^h(P_tf^h,P_tf^h),\pi^h\right\rangle\leq e^{-2\widetilde{\lambda}t}\left\langle\Gamma^h(f^h,f^h),\pi^h\right\rangle.
    \end{equation}
    And by Proposition \ref{P1} it holds that
    \begin{equation}\label{II}
        P_t\left[(P_{T-t}f^h)^2\right]-(P_Tf^h)^2=2\int^t_0{P_s\left[\Gamma^h(P_{T-s}f^h,P_{T-s}f^h)\right]}ds.
    \end{equation}
    Using Fubini's theorem and \eqref{PI1}, one can get
    \begin{align*}
        \left\langle\int^t_0{P_s\left[\Gamma^h(P_{T-s}f^h,P_{T-s}f^h)\right]}ds,\pi^h\right\rangle
        &=\int^t_0{\left\langle P_s\left[\Gamma^h(P_{T-s}f^h,P_{T-s}f^h)\right],\pi^h\right\rangle}ds\\
        &=\int^t_0{\left\langle\Gamma^h(P_{T-s}f^h,P_{T-s}f^h),\pi^h\right\rangle}ds\\
        &\geq\int^t_0{\widetilde{\lambda}\left\langle\left(P_{T-s}f^h-\left\langle P_{T-s}f^h,\pi^h\right\rangle\right)^2,\pi^h\right\rangle}ds\\
        &=\widetilde{\lambda}\int^t_0{\left\langle P_s\left[\left(P_{T-s}f^h-\left\langle P_{T-s}f^h,\pi^h\right\rangle\right)^2\right],\pi^h\right\rangle}ds.
    \end{align*}
    Since $\langle P_tf^h,\pi^h\rangle=\langle f^h,\pi^h\rangle$ is a constant, one can have
    \begin{align*}
        &\left\langle P_t\left[(P_{T-t}f^h)^2\right]-(P_Tf^h)^2,\pi^h\right\rangle\\
        &\qquad=\left\langle(P_{T-t}f^h)^2,\pi^h\right\rangle-\left\langle P_{T-t}f^h,\pi^h\right\rangle^2-\left\langle(P_Tf^h)^2,\pi^h\right\rangle+\left\langle P_Tf^h,\pi^h\right\rangle^2\\
        &\qquad=\left\langle\left(P_{T-t}f^h-\left\langle P_{T-t}f^h,\pi^h\right\rangle\right)^2,\pi^h\right\rangle-\left\langle\left(P_Tf^h-\left\langle P_Tf^h,\pi^h\right\rangle\right)^2,\pi^h\right\rangle\\
        &\qquad=\left\langle P_t\left[\left(P_{T-t}f^h-\left\langle P_{T-t}f^h,\pi^h\right\rangle\right)^2\right],\pi^h\right\rangle-\left\langle\left(P_Tf^h-\left\langle P_Tf^h,\pi^h\right\rangle\right)^2,\pi^h\right\rangle.
    \end{align*}
    Then by integrating both sides of \eqref{II} with respect to $\pi^h$,
    \begin{align*}
        &\left\langle P_t\left[\left(P_{T-t}f^h-\left\langle P_{T-t}f^h,\pi^h\right\rangle\right)^2\right],\pi^h\right\rangle-\left\langle\left(P_Tf^h-\left\langle P_Tf^h,\pi^h\right\rangle\right)^2,\pi^h\right\rangle\\
        &\qquad\geq2\widetilde{\lambda}\int^t_0{\left\langle P_s\left[\left(P_{T-s}f^h-\left\langle P_{T-s}f^h,\pi^h\right\rangle\right)^2\right],\pi^h\right\rangle}ds,
    \end{align*}
    and it follows by Gr\"onwall's inequality that
    \begin{equation*}
        \left\langle P_T\left[\left(f^h-\left\langle f^h,\pi^h\right\rangle\right)^2\right],\pi^h\right\rangle\geq e^{2\widetilde{\lambda}T}\left\langle\left(P_Tf^h-\left\langle P_Tf^h,\pi^h\right\rangle\right)^2,\pi^h\right\rangle,
    \end{equation*}
    that is,
    \begin{equation}
        \left\langle(P_tf^h)^2,\pi^h\right\rangle-\left\langle P_tf^h,\pi^h\right\rangle^2\leq e^{-2\widetilde{\lambda}t}\left(\left\langle(f^h)^2,\pi^h\right\rangle-\left\langle f^h,\pi^h\right\rangle^2\right)
    \end{equation}
    Together with \eqref{CoG2} we can conclude that
    \begin{equation*}
        \begin{aligned}
            &\left\langle\left(P_tf^h-\langle P_tf^h,\pi^h\rangle\right)^2+\Gamma^h(P_tf^h,P_tf^h),\pi^h\right\rangle\\
            &\qquad\leq e^{-2\widetilde{\lambda} t}\left\langle\left(f^h-\langle f^h,\pi^h\rangle\right)^2+\Gamma^h(f^h,f^h),\pi^h\right\rangle.
        \end{aligned}
    \end{equation*}
\end{proof}

Next we make use of a perturbation argument to obtain Corollary \ref{C2}. Notice that the operator $\Gamma^h$ and the coefficients $\{\alpha_i,\beta_i\}_{i\in\mathbb{Z}}$ exist in exact bijective correspondence. Since the detailed balance condition holds, then the operator $\Gamma^h$ is uniquely determined by the stationary solution $\pi$ of the Fokker-Planck equation \eqref{FPE}. This relationship becomes crucial when analyzing perturbations, as modifying the potential $u$ simultaneously affects both the stationary measure $\pi^h$ and the associated operator $\Gamma^h$.

\begin{proof}[Proof of Corollary \ref{C2}]
    Let $\widetilde{\pi}$ be the stationary solution with respect to $\widetilde{u}$, the corresponding coefficients $\widetilde{\alpha}_i,\widetilde{\beta}_i$ and the related \textit{carr\'e du champ} operator $\widetilde{\Gamma}^h$. Since by assumption $\widetilde{\pi}^h$ differs $\pi^h$ on a compact set, then it holds that
    \begin{equation*}
        0<\left\|\frac{\pi^h}{\widetilde{\pi}^h}\right\|^{-1}_{\ell^\infty}\leq\frac{\widetilde{\pi}_i}{\pi_i}\leq\left\|\frac{\widetilde{\pi}^h}{\pi^h}\right\|_{\ell^\infty}<+\infty,\quad\forall\ i\in\mathbb{Z},
    \end{equation*}
    and
        \begin{equation*}
        \sup_{i\in\mathbb{Z}}{\frac{\widetilde{\alpha}_i}{\alpha_i}}<+\infty,\qquad \sup_{i\in\mathbb{Z}}{\frac{\widetilde{\beta}_i}{\beta_i}}<+\infty.
    \end{equation*}
    Then we can claim that $\mathcal{H}^1(\pi^h)=\mathcal{H}^1(\widetilde{\pi}^h)$.

    Although $\widetilde{\pi}^h$ may not be a probability, we can still make use of Theorem \ref{T1} to obtain the discrete Poincar\'e inequality for $\widetilde{\pi}^h$ with a positive constant $\kappa$, since the inequality \eqref{GDPI} does not change under scaling of the measure. Consequently, for any constant $m$ and $f^h\in\mathcal{H}^1(\pi^h)$ it holds that
    \begin{align*}
        \left\langle(f^h-m)^2,\pi^h\right\rangle
        &\leq\left\|\frac{\pi^h}{\widetilde{\pi}^h}\right\|_{\ell^\infty}\left\langle(f^h-m)^2,\widetilde{\pi}^h\right\rangle\\
        &\leq\frac{1}{\widetilde{\kappa}}\left\|\frac{\pi^h}{\widetilde{\pi}^h}\right\|_{\ell^\infty}\left\langle\widetilde{\Gamma}^h(f^h,f^h),\widetilde{\pi}^h\right\rangle\\
        &\leq\frac{1}{\widetilde{\kappa}}\left\|\frac{\pi^h}{\widetilde{\pi}^h}\right\|_{\ell^\infty}\left\|\frac{\widetilde{\pi}^h}{\pi^h}\right\|_{\ell^\infty}\left\langle\widetilde{\Gamma}^h(f^h,f^h),\pi^h\right\rangle\\
        &\leq\frac{1}{\widetilde{\kappa}}\left\|\frac{\pi^h}{\widetilde{\pi}^h}\right\|_{\ell^\infty}\left\|\frac{\widetilde{\pi}^h}{\pi^h}\right\|_{\ell^\infty}\sup_{i\in\mathbb{Z}}{\left(\frac{\widetilde{\alpha}_i}{\alpha_i}+\frac{\widetilde{\beta}_i}{\beta_i}\right)}\left\langle\Gamma^h(f^h,f^h),\pi^h\right\rangle.
    \end{align*}
    Set $m=\langle f^h,\pi^h\rangle$, then we can obtain the Poincar\'e inequality for $\pi^h$ and \textit{Corollary \ref{C2}}.
\end{proof}

\section{Discrete Poincar\'e inequality by Lyapunov function}\label{S3}
In this section, we introduce another approach to establish the discrete Poincar\'e inequality. We make use of the Lyapunov function to extend a local discrete Poincar\'e inequality to the full space. The function $W$ is called a Lyapunov function if $W\geq1$ and if there exists $\theta>0$, $b\geq0$ and some $R>0$ such that the following inequality holds for all $i\in\mathbb{Z}$
\begin{equation}\label{LF}
    \mathcal{L}_hW_i\leq-\theta W_i+bI_{B(0,R)}(x_i),
\end{equation}
where $W_i:=W(x_i)=W(ih)$ and $B(0,R)$ is a ball centered at $0$ with radius $R$. And we define $\pi^h|_{B(0,R)}$ as a conditional distribution,
\begin{equation}
    \pi^h|_{B(0,R)}(A):=\frac{\pi^h(A\bigcap B(0,R))}{\pi^h(B(0,R))},
\end{equation}
where $A$ is a Borel measurable set in $\mathbb{R}$. It can be verified that $\mathcal{H}^1(\pi^h)\subset\mathcal{H}^1(\pi^h|_{B(0,R)})$, then we may get the discrete Poincar\'e inequality for $\pi^h$ through that of $\pi^h|_{B(0,R)}$.

Then we can introduce the key idea in this section, which is an analogous result of \cite[Theorem 1.4]{bakry2008rate}. The proof is left in subsection \ref{S3.1}.

\begin{lemma}\label{L0}
    Suppose there exists a Lyapunov function $W\geq1$ satisfying \eqref{LF}. If the conditional measure $\pi^h|_{B(0,R)}$ satisfies a local discrete discrete Poincar\'e inequality on the ball $B(0,R)$, then $\pi^h$ will satisfy a global discrete Pioncar\'e inequality in the full space.
\end{lemma}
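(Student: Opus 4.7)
The plan is to adapt the Bakry-Cattiaux-Guillin Lyapunov argument \cite{bakry2008rate} to the present continuous-time Markov chain framework. Fix $f^h\in\mathcal{H}^1(\pi^h)$ and let
\[
m:=\frac{1}{\pi^h(B(0,R))}\sum_{x_i\in B(0,R)}f_i\pi_i
\]
be the conditional mean of $f^h$ on $B(0,R)$. Since $\langle(f^h)^2,\pi^h\rangle-\langle f^h,\pi^h\rangle^2\leq\langle(f^h-m)^2,\pi^h\rangle$, it suffices to dominate the right-hand side by a multiple of $\langle\Gamma^h(f^h,f^h),\pi^h\rangle$. Using $W\geq 1$, the Lyapunov inequality \eqref{LF} gives $\theta\leq-\mathcal{L}_hW_i/W_i+bI_{B(0,R)}(x_i)/W_i$ pointwise in $i$; multiplying by $(f_i-m)^2\pi_i$ and summing yields the splitting
\[
\theta\langle(f^h-m)^2,\pi^h\rangle\leq\left\langle-\frac{\mathcal{L}_hW}{W}(f^h-m)^2,\pi^h\right\rangle+b\left\langle\frac{(f^h-m)^2}{W}I_{B(0,R)},\pi^h\right\rangle.
\]

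For the first term, I would use the self-adjointness of $\mathcal{L}_h$ together with the Leibniz-type formula for $\Gamma^h$ and the stationarity relation $\langle\mathcal{L}_h(\psi^h\eta^h),\pi^h\rangle=0$ to derive the discrete integration-by-parts identity $\langle\Gamma^h(\psi^h,\eta^h),\pi^h\rangle=-\langle\psi^h\mathcal{L}_h\eta^h,\pi^h\rangle$. Taking $\psi^h=(f^h-m)^2/W$ and $\eta^h=W$ identifies the first term with $\langle\Gamma^h((f^h-m)^2/W,W),\pi^h\rangle$. The crucial step is then the pointwise inequality $\Gamma^h((f^h-m)^2/W,W)_i\leq\Gamma^h(f^h,f^h)_i$, which by the explicit formula \eqref{G} for $\Gamma^h$ reduces to the elementary algebraic identity
\[
(f_j-f_i)^2-(W_j-W_i)\!\left(\frac{(f_j-m)^2}{W_j}-\frac{(f_i-m)^2}{W_i}\right)=\frac{(W_j(f_i-m)-W_i(f_j-m))^2}{W_iW_j}\geq 0,
\]
applied at $(i,j)=(i,i+1)$ weighted by $\alpha_i/2$ and at $(i,j)=(i-1,i)$ weighted by $\beta_i/2$.

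For the second term, $W\geq 1$ lets us discard the $1/W$ factor, and the local discrete Poincar\'e inequality on $B(0,R)$ (which applies with our choice of mean $m$) gives
\[
\left\langle\frac{(f^h-m)^2}{W}I_{B(0,R)},\pi^h\right\rangle\leq\langle(f^h-m)^2I_{B(0,R)},\pi^h\rangle\leq\frac{1}{\kappa_R}\langle\Gamma^h(f^h,f^h),\pi^h\rangle,
\]
where $\kappa_R$ denotes the local Poincar\'e constant on the ball. Combining the two estimates yields the global discrete Poincar\'e inequality with constant $\kappa=\theta\kappa_R/(\kappa_R+b)$.

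The main obstacle is the pointwise comparison of $\Gamma^h((f^h-m)^2/W,W)$ with $\Gamma^h(f^h,f^h)$; this is the discrete analogue of the continuous Cauchy-Schwarz bound $\Gamma(g^2/W,W)\leq\Gamma(g,g)$ underlying \cite{bakry2008rate}, and the fact that the defect is a non-negative perfect square divided by $W_iW_j$ is exactly what makes the Lyapunov strategy carry over to the jump-process setting. A secondary subtlety is the absolute convergence of the sums in the integration by parts, which I would handle by first establishing the inequality for $f^h\in\ell^\infty$ (where $(f^h-m)^2/W$ is bounded and Lemma \ref{L} controls the tails) and then extending to all of $\mathcal{H}^1(\pi^h)$ by the density argument of subsection \ref{S2.1}.
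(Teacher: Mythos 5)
Your proposal is correct and follows essentially the same route as the paper's proof of Lemma \ref{L0}: the same Lyapunov splitting of $\theta\langle(f^h-m)^2,\pi^h\rangle$, the same integration by parts (via symmetry of $\mathcal{L}_h$ and $\mathcal{L}_h^*\pi^h=0$) identifying $\langle-\tfrac{(f^h-m)^2}{W^h}\mathcal{L}_hW^h,\pi^h\rangle$ with $\langle\Gamma^h(\tfrac{(f^h-m)^2}{W^h},W^h),\pi^h\rangle$, the same comparison with $\Gamma^h(f^h,f^h)$, and the same choice $m=\langle f^h,\pi^h|_{B(0,R)}\rangle$ feeding the local inequality, yielding the identical constant $\kappa=\theta\kappa_R/(\kappa_R+b)$. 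The only cosmetic difference is that you verify the key pointwise bound $\Gamma^h((f^h-m)^2/W^h,W^h)\leq\Gamma^h(f^h,f^h)$ through a closed-form perfect-square identity (which checks out), whereas the paper's Proposition \ref{P2} obtains the same fact from an integral-in-$s$ representation.
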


To establish Theorem \ref{T2}, we need to construct a Lyapunov function $W\geq$ satisfying \eqref{LF}, and verify a local discrete Poincar\'e inequality for $\pi^h$. In subsection \ref{S3.2}, we will demonstrate the existence of a suitable Lyapunov function $W$ satisfying \eqref{LF} for sufficiently small $h$.

\begin{lemma}\label{L1}
    Under Assumption \ref{A0} and \ref{ass:B}, there exists $h_0>0$ such that $W(x):=e^{|x|}$ is a Lyapunov function satisfying \eqref{LF} for all $0<h<h_0$.
\end{lemma}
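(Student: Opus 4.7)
The plan is to verify the Lyapunov drift condition $\mathcal{L}_h W_i \leq -\theta W_i + b\, I_{B(0,R)}(x_i)$ by case analysis on the magnitude of $|x_i|$, mirroring the continuous identity $\mathcal{L} W(x)/W(x) = 1 - \mathrm{sgn}(x)\, u'(x)$, which under Assumption \ref{A0} is bounded above by $-\theta$ for $|x|$ large. First I would compute $\mathcal{L}_h W_i$ explicitly. For $x_i > h$, the ratios $W_{i\pm 1}/W_i = e^{\pm h}$ together with $1 - e^{-h} = e^{-h}(e^h - 1)$ give
\begin{equation*}
    \frac{\mathcal{L}_h W_i}{W_i} = h^{-2}(e^h - 1)\bigl[B(s_i) - e^{-h} B(-t_i)\bigr],
\end{equation*}
where $s_i := u_{i+1} - u_i$ and $t_i := u_i - u_{i-1}$; the case $x_i < -h$ is handled symmetrically, and the third property in Assumption \ref{ass:B} ($B(-s) = e^s B(s)$) will be used repeatedly.

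Next, I would split $|x_i| \geq R$ into two regimes. On the set $R \leq |x_i| \leq R^*$ with $R^*$ a fixed large constant, Taylor expansion of $f(h) := B(s(h)) - e^{-h} B(-t(h))$ around $h = 0$, using $B(0)=1$, $B'(0) = -1/2$, and $s'(0) = t'(0) = u'(x_i)$, yields $f(0) = 0$ and $f'(0) = 1 - u'(x_i)$, so uniformly on this compact set $\mathcal{L}_h W_i / W_i = (1 - u'(x_i)) + o(1)$ as $h \to 0$; choosing $R$ so that $aR > 1 + \theta$ (possible under Assumption \ref{A0}) makes this $\leq -\theta/2$ for $h$ sufficiently small. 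In the far regime $|x_i| > R^*$, where $s_i, t_i$ need not be small, I would instead use Lipschitz continuity of $B$ combined with $B(s) = e^{-s}B(-s)$ to derive $B(s) \leq e^{-s}(1 + Ls) \to 0$, together with the monotonicity bound $B(-t) \geq 1$. Fixing $s^*$ with $e^{-s^*}(1 + Ls^*) \leq 1/2$ ensures that whenever $s_i \geq s^*$, which by $s_i \geq a h x_i$ (from Assumption \ref{A0}) holds for $x_i \geq s^*/(ah)$, we have $B(s_i) - e^{-h}B(-t_i) \leq 1/2 - e^{-h} \leq -1/4$ for $h$ small, whence $\mathcal{L}_h W_i/W_i \lesssim -1/h$. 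On $|x_i| \leq R$, the same Taylor formula yields $|\mathcal{L}_h W_i| \leq C W_i$ uniformly in $h$, supplying the constant $b$.

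The main obstacle will be bridging the Taylor and decay regimes: in the intermediate strip $x_i \in [R^*, s^*/(ah)]$, $s_i$ is neither uniformly small (so the Taylor error is not $o(1)$) nor past the threshold $s^*$ (so the decay bound is not yet useful). To close this gap I would use compactness: in this strip $s_i$ and $t_i$ lie in a compact subinterval of $(0,\infty)$, and by strict monotonicity of $B$ the gap $B(s_i) - B(-t_i) \leq -c_1 < 0$ is uniform there, so the $O(h)$ correction coming from $e^{-h} - 1$ still leaves $B(s_i) - e^{-h}B(-t_i) \leq -c_1/2$. Controlling the discrepancy $|s_i - t_i| = O(h^2|u''(\xi_i)|)$ on this strip uses that $s_i \leq s^*$ caps $x_i$ and, via smoothness of $u$, the relevant values of $|u''|$; the parameters $R^*$, $s^*$, and $h_0$ must be chosen compatibly so that the three regimes cover $\{|x_i| \geq R\}$ for all $0 < h < h_0$.
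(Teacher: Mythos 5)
Your reduction $\mathcal{L}_hW_i/W_i=h^{-2}(e^h-1)\bigl[B(s_i)-e^{-h}B(-t_i)\bigr]$ and your near regime (Taylor at $h=0$) and far regime ($s_i\ge s^*$) are sound, but the intermediate strip $x_i\in[R^*,s^*/(ah)]$ -- which you yourself identify as the main obstacle -- is not closed by the argument you propose. On that strip it is not true that $s_i,t_i$ lie in a fixed compact subinterval of $(0,\infty)$: for $x_i$ near $R^*$ one has $s_i,t_i=O(h)$ (they can be as small as roughly $ahR^*$), so the gap $B(s_i)-B(-t_i)$ is only $O(h)$ and there is no $h$-independent constant $c_1$ with $B(s_i)-B(-t_i)\le-c_1$. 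What actually saves this regime is that the $O(h)$ gap is multiplied by the prefactor $h^{-2}(e^h-1)\sim h^{-1}$, and extracting a uniform negative drift from that requires a quantitative lower bound such as $B(-t_i)\ge B(-2aR^*h)\ge 1+c\,aR^*h$ (mean value theorem near $0$ together with $B'(0)=-1/2$), not a compactness argument. The auxiliary claims are also unjustified: $s_i\le s^*$ does not cap $x_i$ by an $h$-independent constant (the strip extends to $s^*/(ah)\to\infty$), and neither Assumption \ref{A0} nor \ref{ass:B} bounds $|u''|$ there, so the control of $|s_i-t_i|$ has no basis.

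The paper avoids the three regimes altogether: for every grid point beyond a fixed radius $R$ (with $R>M$ and $aR$ large) it uses only monotonicity of $B$, namely $B(u_{i+1}-u_i)\le B(0)=1$ and $B(u_{i-1}-u_i)\ge B(-2aRh)\ge 1+c\,aRh$ via the mean value theorem at $0$, which gives $\mathcal{L}_hW_i/W_i\le\frac{e^h+e^{-h}-2}{h^2}-c\,aR\,\frac{1-e^{-h}}{h}$, uniformly negative for $h<h_0$ once $aR$ is chosen large enough. This single estimate covers all $|x_i|>R$ at once and subsumes your regimes: the Taylor expansion and the decay bound $B(s)\le e^{-s}(1+Ls)$ are both unnecessary, and replacing your intermediate-strip argument by this monotonicity-plus-MVT bound essentially reproduces the paper's proof. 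One further caveat, which your write-up shares with the paper: on $|x_i|\le R$ the claim $|\mathcal{L}_hW_i|\le CW_i$ uniformly in $h$ fails at the kink $i=0$, where $W_1-2W_0+W_{-1}=2(e^h-1)$ forces $\mathcal{L}_hW_0\sim 2/h$, so as stated the constant $b$ in \eqref{LF} is not uniform in $h$ (the issue disappears if the Lyapunov function is smoothed, e.g.\ $W=\cosh x$); your "same Taylor formula" assertion does not apply there because the formula was derived assuming $x_i>h$.
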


In the continuous setting, when the potential $u$ is locally bounded, the conditional measure $\pi|_{B(0,R)}$ is known to satisfy a local Poincar\'e inequality. We will establish a parallel result for the discrete landscape in subsection \ref{S3.3}.

\begin{lemma}\label{L2}
    For any radius $R>0$ the conditional measure $\pi^h|_{B(0,R)}$ satisfies a local discrete Poincar\'e inequality with constant $\kappa_R$ in the sense that the following inequality holds for all $f^h\in\mathcal{H}^1(\pi^h|_{B(0,R)})$,
    \begin{equation}\label{LPI}
        \left\langle(f^h)^2,\pi^h|_{B(0,R)}\right\rangle-\left\langle f^h,\pi^h|_{B(0,R)}\right\rangle^2\leq\frac{1}{\kappa_R}\left\langle\Gamma^h(f^h,f^h),\pi^h|_{B(0,R)}\right\rangle.
    \end{equation}
\end{lemma}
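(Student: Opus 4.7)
The plan is to reduce the problem to a finite-dimensional spectral gap estimate. Since $B(0,R)$ contains only finitely many grid points $\mathcal{I}_R := \{i \in \mathbb{Z} : |ih| \le R\}$, the conditional measure $\pi^h|_{B(0,R)}$ is supported on this finite set, and \eqref{LPI} becomes a finite-dimensional statement that holds with some positive constant for each fixed $h$ simply by irreducibility of the associated restricted chain. The real task is to produce $\kappa_R$ that is bounded below uniformly in $h$; this uniformity is what ultimately transfers the continuous Poincar\'e constant on $B(0,R)$ to the discrete setting.

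First I would collect the uniform bounds coming from the compactness of $\overline{B(0,R)}$. Smoothness of $u$ gives $m_R \le u \le M_R$ and $|u'| \le L_R$ on $\overline{B(0,R)}$, so $e^{-u_i}$ is bounded above and below on $\mathcal{I}_R$, whence $\pi^h|_{B(0,R)}(i)$ is comparable to $1/|\mathcal{I}_R|$ with constants depending only on $R$ and $u$. For $h$ small enough, $|u_{i+1}-u_i| \le L_R h$ is small, and continuity of $B$ at $0$ together with $B(0)=1$ yields $\alpha_i, \beta_i \asymp 1/h^2$ on $\mathcal{I}_R$; consequently $\alpha_k \pi^h|_{B(0,R)}(k) \gtrsim 1/(h^2 |\mathcal{I}_R|)$.

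Next I would use the double-sum representation of the variance,
\[
\sum_i (f_i - \bar f)^2 \pi^h|_{B(0,R)}(i) = \tfrac{1}{2} \sum_{i,j \in \mathcal{I}_R} (f_i-f_j)^2\, \pi^h|_{B(0,R)}(i)\, \pi^h|_{B(0,R)}(j),
\]
bound each pair by telescoping followed by Cauchy--Schwarz, $(f_j-f_i)^2 \le (j-i)\sum_{k=i}^{j-1}(\Delta^+f_k)^2$, and swap the order of summation. The coefficient of $(\Delta^+ f_k)^2$ in the resulting upper bound is
\[
D_k := \sum_{i \le k < j} (j-i)\, \pi^h|_{B(0,R)}(i)\, \pi^h|_{B(0,R)}(j) \le \tfrac{1}{4}|\mathcal{I}_R|,
\]
where I use $p(1-p) \le 1/4$ applied to $p := \pi^h|_{B(0,R)}(\{i \le k\})$ together with the trivial diameter bound $E[j\mid j>k]-E[i\mid i\le k] \le |\mathcal{I}_R|$. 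Combining this with the lower bound on $\alpha_k \pi^h|_{B(0,R)}(k)$ from Step~1 and using $h|\mathcal{I}_R| \lesssim R$ gives $\mathrm{Var}_{\pi^h|_{B(0,R)}}(f^h) \lesssim R^2 \langle \Gamma^h(f^h,f^h), \pi^h|_{B(0,R)}\rangle$, yielding $\kappa_R \gtrsim 1/R^2$ independent of $h$.

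The main obstacle is precisely this uniformity in $h$: the chain has $|\mathcal{I}_R| \asymp R/h$ states, and a crude application of Cauchy--Schwarz would give a constant proportional to $|\mathcal{I}_R|^2/\min_k(\alpha_k \pi^h|_{B(0,R)}(k))$, which appears to blow up as $h \downarrow 0$. The cancellation that saves the day is that the conductances grow like $\alpha_k \pi^h|_{B(0,R)}(k) \asymp 1/(hR)$ while the chain diameter grows like $|\mathcal{I}_R|\asymp R/h$, leaving a net factor of order $R^2$. The sharpened bound $D_k \le |\mathcal{I}_R|/4$ (rather than the wasteful $|\mathcal{I}_R|^2$), together with the detailed balance identity $\alpha_k \pi_k = \beta_{k+1} \pi_{k+1}$ which symmetrizes the Dirichlet form and removes any asymmetry between the two boundary sides of $\mathcal{I}_R$, is what makes this cancellation explicit and produces an $h$-independent $\kappa_R$.
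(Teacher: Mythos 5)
Your argument is correct and takes essentially the same approach as the paper: both rest on a telescoping decomposition of $f_i-f_j$ plus Cauchy--Schwarz (the paper phrases it as Jensen's inequality with an auxiliary measure), an exchange of the order of summation, and the key cancellation $h^2\alpha_k=B(u_{k+1}-u_k)\gtrsim 1$ combined with $h\,|\mathcal{I}_R|\asymp R$, which produces an $h$-independent constant of order $R^2$ up to factors coming from the bounds on $\pi$ and $B$ over the ball. The only cosmetic differences are that you start from the double-sum variance identity and sharpen the bookkeeping via $D_k\le |\mathcal{I}_R|/4$, whereas the paper expands $f_i-\langle f^h,\pi^h|_{B(0,R)}\rangle$ directly; the appeal to detailed balance at the end is not actually needed.
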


With Lemma \ref{L0}, Lemma \ref{L1} and Lemma \ref{L2}, one can have
\begin{equation}\label{LDPI}
    \left\langle(f^h)^2,\pi^h\right\rangle-\left\langle f^h,\pi^h\right\rangle^2\leq\frac{1}{\kappa}\left\langle\Gamma^h(f^h,f^h),\pi^h\right\rangle,
\end{equation}
where $f^h\in\mathcal{H}^1(\pi^h)$. And we have established Theorem \ref{T2}.

\begin{remark}
    Under Assumption \ref{A0} and \ref{ass:B}, the local discrete Poincar\'e inequality on $B(0,R)$ with constant $\kappa_R$ can be extended to a global inequality with constant $\kappa=\theta\kappa_R/(\kappa_R+b)$.
\end{remark}

\subsection{Proof of Lemma \ref{L0}}\label{S3.1}
In this subsection, we establish Theorem \ref{T2} using the Lyapunov function approach. Following the idea in \cite{bakry2008rate}, we begin with a key observation that applies to arbitrary sequences $W^h$, and which remains valid when $W$ is specifically chosen as a Lyapunov function. We derive the following properties:

\begin{proposition}\label{P2}
    For any $f^h\in\ell^2(\pi^h)$ and arbitrary sequence $W^h$, it holds that
    \begin{equation}\label{fw}
        \begin{aligned}
            &\Gamma^h\left(\frac{(f_i)^2}{W_i},W_i\right)-\Gamma^h(f_i,f_i)\\
            &\quad=-\frac{1}{2}\int^1_0{\left[\alpha_i\left(\frac{f_{i+1}W_i-f_iW_{i+1}}{W_i+s\Delta^+W_i}\right)^2+\beta_i\left(\frac{f_{i-1}W_i-f_iW_{i-1}}{W_i-s\Delta^-W_i}\right)^2\right]}ds,
        \end{aligned}
    \end{equation}
    then
    \begin{equation}
        \Gamma^h\left(\frac{(f^h)^2}{W^h},W^h\right)-\Gamma^h(f^h,f^h)\leq0.
    \end{equation}
\end{proposition}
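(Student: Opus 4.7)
The plan is to verify the identity \eqref{fw} by treating the $\alpha_i$ and $\beta_i$ contributions separately, since $\Gamma^h$ splits as a sum of a forward and a backward piece. The inequality conclusion is then immediate from the pointwise nonnegativity of the two integrands together with $\alpha_i,\beta_i>0$.

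First I would work on the forward piece. Writing $a=W_i$, $b=W_{i+1}$, $x=f_i$, $y=f_{i+1}$, one has
\[
\Delta^+\!\left(\tfrac{f_i^2}{W_i}\right)\Delta^+ W_i \;=\; \frac{(y^2 a-x^2 b)(b-a)}{ab},
\qquad (\Delta^+f_i)^2=(y-x)^2.
\]
A direct expansion collapses the difference of these two quantities to the single term
\[
\frac{(y^2 a-x^2 b)(b-a)}{ab}-(y-x)^2 \;=\; -\frac{(ya-xb)^2}{ab}
\;=\;-\frac{(f_{i+1}W_i - f_i W_{i+1})^2}{W_i W_{i+1}}.
\]
This is the algebraic heart of the proof, and is the only nonroutine step.

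Second, I would convert the factor $\frac{1}{W_iW_{i+1}}$ into an integral so that the denominator $W_i+s\Delta^+W_i=(1-s)W_i+sW_{i+1}$ appearing in the target expression shows up naturally. For this I use the elementary identity, valid whenever $W_i,W_{i+1}$ have the same sign,
\[
\int_0^1 \frac{ds}{\bigl((1-s)W_i+sW_{i+1}\bigr)^2} \;=\; \frac{1}{W_iW_{i+1}},
\]
which follows by the substitution $u=(1-s)W_i+sW_{i+1}$ (and holds trivially when $W_i=W_{i+1}$). Multiplying the previous display by $\tfrac{1}{2}\alpha_i$ and moving $(f_{i+1}W_i-f_iW_{i+1})^2$ inside the integral gives exactly the $\alpha_i$-contribution on the right-hand side of \eqref{fw}.

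Third, the backward piece is obtained by the same computation with $i+1$ replaced by $i-1$ and $\Delta^+$ by $\Delta^-$; one checks that $W_i-s\Delta^-W_i=(1-s)W_i+sW_{i-1}$, so the integral identity applies verbatim and produces the $\beta_i$-term. Summing both pieces establishes \eqref{fw}. Finally, since $\alpha_i,\beta_i\geq 0$ and the two integrands in \eqref{fw} are manifestly nonnegative (squares divided by squares), the integrated expression is nonpositive, yielding $\Gamma^h(\tfrac{(f^h)^2}{W^h},W^h)-\Gamma^h(f^h,f^h)\leq 0$. No further ingredient is needed; the only care required is to ensure $W_i>0$ so that the integrand never blows up, which will be automatic in the application where $W\geq 1$.
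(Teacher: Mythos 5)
Your proof is correct, and it reaches the identity \eqref{fw} by a somewhat different route than the paper. The paper never writes down your closed form: it interpolates linearly, setting $\widetilde f_i(s)=f_i+s\Delta^+f_i$, $\widetilde W_i(s)=W_i+s\Delta^+W_i$, expresses $\Delta^+\bigl(f_i^2/W_i\bigr)\Delta^+W_i=\int_0^1\bigl(\widetilde f_i^2/\widetilde W_i\bigr)'(s)\,\widetilde W_i'\,ds$, and completes the square inside the integral to obtain $(\Delta^+f_i)^2-\int_0^1\bigl(\Delta^+f_i-\tfrac{f_i+s\Delta^+f_i}{W_i+s\Delta^+W_i}\Delta^+W_i\bigr)^2ds$, whose integrand simplifies to the one in \eqref{fw}; the $s$-dependence is carried throughout. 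You instead collapse the forward piece to the exact algebraic identity
\begin{equation*}
\Delta^+\Bigl(\frac{f_i^2}{W_i}\Bigr)\Delta^+W_i-(\Delta^+f_i)^2=-\frac{(f_{i+1}W_i-f_iW_{i+1})^2}{W_iW_{i+1}},
\end{equation*}
and then restore the integral via $\int_0^1 ds/\bigl((1-s)W_i+sW_{i+1}\bigr)^2=1/(W_iW_{i+1})$, the backward piece being symmetric. Both computations are elementary and per-site; yours has the advantage of exhibiting an integral-free closed form that makes the sign of $\Gamma^h\bigl((f^h)^2/W^h,W^h\bigr)-\Gamma^h(f^h,f^h)$ manifest, while the paper's interpolation argument mirrors the continuous Bakry--Cattiaux--Guillin computation and is the form in which the identity is used verbatim in the proof of Lemma \ref{L0}. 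Note also that you are right to flag positivity of $W$: both your integral identity and the paper's (whose integrand has denominator $W_i+s\Delta^+W_i$) implicitly require the $W_i$ to have a fixed sign, a hypothesis the statement's phrase ``arbitrary sequence $W^h$'' glosses over but which holds in the application since $W\geq1$.
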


\begin{proof}
    Define $\widetilde{f}_i(s):=f_i+s\Delta^+f_i$ and $\widetilde{W}_i(s):=W_i+s\Delta^+W_i$. Then both $\widetilde{f}'(s)=\Delta^+f_i$ and $\widetilde{W}'(s)=\Delta^+W_i$ are constants, and it holds that
    \begin{align*}
        \Delta^+\left(\frac{f^2_i}{W_i}\right)\Delta^+W_i
        &=\int^1_0{\left(\frac{\widetilde{f}^2_i}{\widetilde{W}_i}\right)'(s)\widetilde{W}'(s)}ds\\
        &=\int^1_0{\left(\frac{2\widetilde{f}_i\widetilde{f}'_i}{\widetilde{W}_i}\widetilde{W}'_i(s)-\frac{\widetilde{f}^2_i}{\widetilde{W}^2_i}\left(\widetilde{W}'_i\right)^2(s)\right)}ds\\
        &=\int^1_0{\left(\widetilde{f}'_i\right)^2(s)}ds-\int^1_0{\left(\widetilde{f}'_i-\frac{\widetilde{f}_i}{\widetilde{W}_i}\widetilde{W}'_i\right)^2(s)}ds\\
        &=\left(\Delta^+f_i\right)^2-\int^1_0{\left(\Delta^+f_i-\frac{f_i+s\Delta^+f_i}{W_i+s\Delta^+W_i}\Delta^+W_i\right)^2}ds.
    \end{align*}
    By a parallel argument one can have
    \begin{equation*}
        \Delta^-\left(\frac{f^2_i}{W_i}\right)\Delta^-W_i=(\Delta^-f_i)^2-\int^1_0{\left(\Delta^-f_i-\frac{f_i-s\Delta^-f_i}{W_i-s\Delta^-W_i}\Delta^-W_i\right)^2}ds.
    \end{equation*}
    And together with \eqref{G} one can obtain the identity \eqref{fw}.
\end{proof}

Now we return to Lemma \ref{L0}. By \eqref{LF} it holds that
\begin{equation*}
    1\leq-\frac{\mathcal{L}_hW_i}{\theta W_i}+\frac{b}{\theta}\frac{I_{B(0,R)}(x_i)}{W_i}\leq-\frac{\mathcal{L}_hW_i}{\theta W_i}+\frac{b}{\theta}I_{B(0,R)}(x_i),
\end{equation*}
and for arbitrary $f^h\in\mathcal{H}^1(\pi^h)$,
\begin{align*}
    \left\langle(f^h)^2,\pi^h\right\rangle
    &\leq\left\langle-\frac{\mathcal{L}_hW^h}{\theta W^h}(f^h)^2+\frac{b}{\theta}I^h_{B(0,R)}(f^h)^2,\pi^h\right\rangle\\
    &=\frac{1}{\theta}\left\langle-\frac{(f^h)^2}{W^h}\mathcal{L}_hW^h,\pi^h\right\rangle+\frac{b}{\theta}\pi^h(B(0,R))\left\langle(f^h)^2,\pi^h|_{B(0,R)}\right\rangle.
\end{align*}
Since $\mathcal{L}_h$ is symmetric with respect to $\pi^h$ and $\mathcal{L}^*_h\pi^h=0$, one gets
\begin{align*}
    \left\langle-\frac{(f^h)^2}{W^h}\mathcal{L}_hW^h,\pi^h\right\rangle
    &=\frac{1}{2}\left\langle(f^h)^2,\mathcal{L}^*_h\pi^h\right\rangle-\frac{1}{2}\left\langle\frac{(f^h)^2}{W^h}\mathcal{L}_hW^h,\pi^h\right\rangle-\frac{1}{2}\left\langle W^h\mathcal{L}_h\left(\frac{(f^h)^2}{W^h}\right),\pi^h\right\rangle\\
    &=\frac{1}{2}\left\langle\mathcal{L}_h\left(\frac{(f^h)^2}{W^h}W^h\right)-\frac{(f^h)^2}{W^h}\mathcal{L}_hW^h-W^h\mathcal{L}_h\left(\frac{(f^h)^2}{W^h}\right),\pi^h\right\rangle\\
    &=\left\langle\Gamma^h\left(\frac{(f^h)^2}{W^h},W^h\right),\pi^h\right\rangle\\
    &\leq\left\langle\Gamma^h(f^h,f^h),\pi^h\right\rangle.
\end{align*}
The last inequality follows directly from Proposition \ref{P2}. Note that $\Gamma^h(f^h-m,f^h-m)=\Gamma^h(f^h,f^h)$ for arbitrary constant $m\in\mathbb{R}$. Then one can have
\begin{equation*}
    \left\langle-\frac{(f^h-m)^2}{W^h}\mathcal{L}_hW^h,\pi^h\right\rangle\leq\left\langle\Gamma^h(f^h,f^h),\pi^h\right\rangle.
\end{equation*}
It follows that
\begin{equation}
    \left\langle(f^h-m)^2,\pi^h\right\rangle\leq\frac{1}{\theta}\left\langle\Gamma^h(f^h,f^h),\pi^h\right\rangle+\frac{b}{\theta}\pi^h(B(0,R))\left\langle(f^h-m)^2,\pi^h|_{B(0,R)}\right\rangle.
\end{equation}
If we set $m=\langle f^h,\pi^h|_{B(0,R)}\rangle$, by the local Poincar\'e inequality \eqref{LPI},
\begin{align*}
    \pi^h(B(0,R))\left\langle(f^h-m)^2,\pi^h|_{B(0,R)}\right\rangle
    &\leq\frac{1}{\kappa_R}\pi^h(B(0,R))\left\langle\Gamma^h(f^h,f^h),\pi^h|_{B(0,R)}\right\rangle\\
    &\leq\frac{1}{\kappa_R}\left\langle\Gamma^h(f^h,f^h),\pi^h\right\rangle.
    \end{align*}
Hence, we can conclude that
\begin{align*}
    \left\langle(f^h)^2,\pi^h\right\rangle-\left\langle f^h,\pi^h\right\rangle^2
    &\leq\left\langle(f^h-m)^2,\pi^h\right\rangle\\
    &\leq\frac{\kappa_R+b}{\theta\kappa_R}\left\langle\Gamma^h(f^h,f^h),\pi^h\right\rangle,
\end{align*}
that is, by letting $\kappa=\theta\kappa_R/(\kappa_R+b)$, one can get the inequality \eqref{LDPI}.

\subsection{Construction of Lyapunov function}\label{S3.2}
In this subsection we give a proof to Lemma \ref{L1}. We firstly discuss $\mathcal{L}_hW_i/W_i$ for sufficiently large $|i|$, and we seek for the constants $\theta$ and $R$ such that $\mathcal{L}_hW_i\leq-\theta W_i$ for all $|x_i|>R$. And for the remaining points within the ball $B(0,R)$, the continuity of $B(s)$ and $u(x)$ guarantees the existence of a constant $b$ satisfying \eqref{LF}. Let
\begin{equation}
    W(x):=e^{|x|},
\end{equation}
and it is clear that $W\geq1$.

Let $i$ be a positive integer. Recall that $x_i=ih$, $W_i:=W(x_i)$ and $u_i:=u(x_i)$. Then
\begin{align*}
    \frac{\mathcal{L}_hW_i}{W_i}
    &=\alpha_i\left(e^h-1\right)+\beta_i\left(e^{-h}-1\right)\\
    &=\frac{e^h-1}{h^2}B(u_{i+1}-u_i)+\frac{e^{-h}-1}{h^2}B(u_{i-1}-u_i).
\end{align*}
Since by Assumption \ref{A0} there exists $M>0$ and $a>0$ such that the following inequalities hold for all $R>M$ and $x_{i-1}>R$
\begin{equation*}
    u(x_i)-u(x_{i-1})\geq a\left(x^2_i-x^2_{i-1}\right)\geq2ax_{i-1}h+ah^2>2aRh>0.
\end{equation*}
Then by the monotonicity of $B(s)$,
\begin{equation*}
    \frac{1}{h}B(u_{i+1}-u_i)<\frac{1}{h}B(0)=\frac{1}{h},
\end{equation*}
and together with the mean value theorem one can have
\begin{equation*}
    \frac{1}{h}B(u_{i-1}-u_i)\geq\frac{1}{h}B(-2aRh)=\frac{1}{h}B(0)-2aRB'(-\xi),
\end{equation*}
where $0<\xi<2aRh$. Recall that by definition $B(0)=1$ and $B'(0)=-1/2$. Then by the continuity of $B'(s)$, there exists $\delta>0$ such that $B'(-\xi)>-1$ when $0<\xi<2aRh<\delta$. Then
\begin{equation*}
    0<\frac{1}{h}B(u_{i+1}-u_i)<\frac{1}{h}<\frac{1}{h}+2aR<\frac{1}{h}B(u_i-u_{i-1}).
\end{equation*}
It follows that
\begin{equation*}
    \frac{e^h-1}{h^2}B(u_{i+1}-u_i)+\frac{e^{-h}-1}{h^2}B(u_{i-1}-u_i)<\frac{e^h+e^{-h}-2}{h^2}-2aR\frac{(1-e^{-h})}{h}.
\end{equation*}
Since
\begin{equation*}
    \lim_{h\to0}{\frac{e^h+e^{-h}-2}{h^2}}=\lim_{h\to0}{\frac{1-e^h}{h}}=1,
\end{equation*}
then we can firstly choose a sufficiently large $R$ such that $aR\geq1$ and $R>M$. For this fixed $R$ we can find a sufficiently small $\widehat{h}_0$ to make $2aR\widehat{h}_0<\delta$, and for $0<h<\widehat{h}_0$ it holds that
\begin{equation*}
    \frac{e^h+e^{-h}-2}{h^2}-2aR\frac{(1-e^{-h})}{h}<-\widehat{\theta}<0,
\end{equation*}
where $\widehat{\theta}$ is a positive constant. Hence, we can conclude that there exist $\widehat{h}_0>0$, $R>0$ and $\widehat{\theta}>0$, such that the following inequality holds for all $0<h<\widehat{h}_0$ and $x_i>R$,
\begin{equation*}
    \mathcal{L}_hW_i\leq-\widehat{\theta}W_i.
\end{equation*}

For the negative index $i$ we can repeat the above argument and obtain a parallel result. And we claim that there exists $h_0>0$, $R>0$ and $\theta>0$ such that the following inequality holds for all $0<h<h_0$ and $|x_i|>R$,
\begin{equation*}
    \mathcal{L}_hW_i\leq-\theta W_i.
\end{equation*}
When $|x_i|\leq R$, by boundedness of continuous functions on compact set, we can always find some $b\geq0$ such that
\begin{equation*}
    \mathcal{L}_hW_i\leq-\theta W_i+bI_{B(0,R)}(x_i),
\end{equation*}
then $W$ is a Lyapunov function.

\begin{remark}
     The constants $\theta$, $b$ and $R$ are all independent of the discretization parameter $h$ for $0<h<h_0$. We make use of the properties of potential $u$, and the fact that $e^{h}+e^{-h}-2\sim\mathcal{O}(h^2)$ and $1-e^{-h}\sim\mathcal{O}(h)$ to obtain these constants.
\end{remark}

\subsection{Local Poincar\'e inequality}\label{S3.3}
In this subsection we give a proof to Lemma \ref{L2}. For the general theory of local discrete Poincaré-Sobolev inequalities, we refer to \cite[Lemma B.25]{droniou2018gradient}, which establishes these inequalities through the continuous embedding of bounded variation spaces. In this subsection we will give a more direct and brief proof for the $\mathcal{H}^1(\pi^h|_{B(0,R)})$ landscape. Assume without loss of generality that $R=Nh$ for some positive integer $N$, and denote the conditional probability as $\widehat{\pi}^h=\pi^h|_{B(0,Nh)}$ for simplicity. Since for any $i,j\in\mathbb{N}$ it holds that
\begin{equation*}
    f_i-f_j=\left\{\begin{aligned}
        &\sum^{i-1}_{k=j}{\left(f_{k+1}-f_k\right)},\quad j<i,\\
        &\sum^{j}_{k=i+1}{\left(f_{k-1}-f_k\right)},\quad j>i,
    \end{aligned}
    \right.
\end{equation*}
then
\begin{equation}\label{Eoj}
    \sum^N_{j=-N}{\widehat{\pi}_j\left(f_i-f_j\right)}=\sum^{i-1}_{j=-N}{\sum^{i-1}_{k=j}{\widehat{\pi}_j\left(f_{k+1}-f_k\right)}}+\sum^N_{j=i+1}{\sum^j_{k=i+1}{\widehat{\pi}_j\left(f_{k-1}-f_k\right)}},
\end{equation}
that is,
\begin{align*}
    f_i-\left\langle f^h,\widehat{\pi}^h\right\rangle
    &=\sum^{i-1}_{j=-N}{\sum^{i-1}_{k=j}{\widehat{\pi}_j\left(f_{k+1}-f_k\right)}}+\sum^N_{j=i+1}{\sum^j_{k=i+1}{\widehat{\pi}_j\left(f_{k-1}-f_k\right)}}\\
    &=\sum^{i-1}_{j=-N}{\sum^{i-1}_{k=j}{\frac{\widehat{\pi}_j}{i-j}(i-j)\left(f_{k+1}-f_k\right)}}+\sum^N_{j=i+1}{\sum^j_{k=i+1}{\frac{\widehat{\pi}_j}{j-i}(j-i)(f_{k-1}-f_k)}}.
\end{align*}
Therefore, \eqref{Eoj} can be regarded as an expectation with respect to the discrete density $\mu_i$ defined by
\begin{equation*}
    \mu_i(j,k):=\left\{\begin{aligned}
        &\widehat{\pi}_j/|j-i|,\quad j\leq k<i\ \mbox{ or }\ i<k\leq j,\\
        &\widehat{\pi}_i,\quad j=i,\\
        &0,\quad \mbox{others.}
    \end{aligned}
    \right.
\end{equation*}
It follows by Jensen's inequality that
\begin{align*}
    \left(f_i-\left\langle f^h,\widehat{\pi}^h\right\rangle\right)^2
    &\leq\sum^{i-1}_{j=-N}{\sum^{i-1}_{k=j}{\frac{\widehat{\pi}_j}{i-j}(i-j)^2\left(f_{k+1}-f_k\right)^2}}+\sum^N_{j=i+1}{\sum^j_{k=i+1}{\frac{\widehat{\pi}_j}{j-i}(j-i)^2(f_{k-1}-f_k)^2}}\\
    &=\sum^{i-1}_{j=-N}{\sum^{i-1}_{k=j}{\widehat{\pi}_j(i-j)\left(f_{k+1}-f_k\right)^2}}+\sum^N_{j=i+1}{\sum^j_{k=i+1}{\widehat{\pi}_j(j-i)(f_{k-1}-f_k)^2}}\\
    &=\sum^{i-1}_{k=-N}{\sum^k_{j=-N}\widehat{\pi}_j(i-j)\left(f_{k+1}-f_k\right)^2}+\sum^N_{k=i+1}{\sum^N_{j=k}{\widehat{\pi}_j}(j-i)\left(f_{k-1}-f_k\right)^2}
\end{align*}
Recall that by definition $h^2\alpha_k=B(u_{k+1}-u_k)$, $h^2\beta_k=B(u_{k-1}-u_k)$ and $R=Nh$. The following estimate relies on the key observation that the sums $\sum^k_{j=-N}{(i-j)}$ for $k<i$, and $\sum^N_{j=k}{(j-i)}$ for $k>i$ both scale as $\mathcal{O}(N^2)$. Then by introducing $h^2$ one can obtain $N^2h^2=R^2$ as well as the coefficients $\alpha_k,\beta_k$, making the constant independent of $h$ in the local discrete Poincar\'e inequality to be established. Since $\widehat{\pi}^h$ is bounded within the ball $B(0,R)$, one can have
\begin{align*}
    \left(f_i-\left\langle f^h,\widehat{\pi}^h\right\rangle\right)^2
    &\leq\sum^{i-1}_{k=-N}{\sum^k_{j=-N}{\left(\max_{-N\leq j\leq N}\widehat{\pi}_j\right)\frac{h^2(i-j)}{h^2\alpha_k}\alpha_k\left(f_{k+1}-f_k\right)^2}}\\
    &\quad+\sum^N_{k={i+1}}{\sum^N_{j=k}{\left(\max_{-N\leq j\leq N}{\widehat{\pi}_j}\right)\frac{h^2(j-i)}{h^2\beta_k}\beta_k\left(f_{k-1}-f_k\right)^2}}\\
    &\leq C_1\sum^{i-1}_{k=-N}{\alpha_k\left(f_{k+1}-f_k\right)^2}+C_2\sum^N_{k=i+1}{\beta_k\left(f_{k-1}-f_k\right)^2},
\end{align*}
where
\begin{equation*}
    \left\{\begin{aligned}
        &C_1=4R^2\left(\sup_{x\in B(0,R)}{\pi(x)}\right)\left(\inf_{x\in B(0,R)}{B\left(u(x+h)-u(x)\right)}\right)^{-1}\\
        &C_2=4R^2\left(\sup_{x\in B(0,R)}{\pi(x)}\right)\left(\inf_{x\in B(0,R)}{B\left(u(x-h)-u(x)\right)}\right)^{-1}.
    \end{aligned}
    \right.
\end{equation*}
Recall that by definition $\sum^N_{i=-N}{\widehat{\pi}_i}=1$. Then
\begin{align*}
    \left\langle\left(f^h-\left\langle f^h,\widehat{\pi}^h\right\rangle\right)^2,\widehat{\pi}^h\right\rangle
    &\leq C_1\sum^N_{i=-N}{\sum^{i-1}_{k=-N}{\widehat{\pi}_i\alpha_k\left(f_{k+1}-f_k\right)^2}}+C_2\sum^N_{i=-N}{\sum^N_{k=i+1}{\widehat{\pi}_i\beta_k\left(f_{k-1}-f_k\right)^2}}\\
    &=C_1\sum^N_{k=-N}{\sum^N_{i=k+1}{\widehat{\pi}_i\alpha_k\left(f_{k+1}-f_k\right)^2}}+C_2\sum^N_{k=-N}{\sum^{k-1}_{i=-N}{\widehat{\pi}_i\beta_k\left(f_{k-1}-f_k\right)^2}}\\
    &\leq C_1\sum^N_{k=-N}{\sum^N_{i=-N}{\widehat{\pi}_i\alpha_k\left(f_{k+1}-f_k\right)^2}}+C_2\sum^N_{k=-N}{\sum^N_{i=-N}{\widehat{\pi}_i\beta_k\left(f_{k-1}-f_k\right)^2}}\\
    &\leq\frac{1}{\kappa_R}\left\langle\Gamma^h(f^h,f^h),\widehat{\pi}^h\right\rangle,
\end{align*}
where
\begin{equation*}
    \frac{1}{\kappa_R}=2\max\{C_1,C_2\}\left(\inf_{x\in B(0,R)}{\pi(x)}\right)^{-1}.
\end{equation*}
Hence, we can conclude that for arbitrary $R>0$ there exists $\kappa_R>0$ such that the following inequality holds for all $f^h\in\mathcal{H}^1(\pi^h|_{B(0,R)})$, 
\begin{equation*}
    \left\langle(f^h)^2,\pi^h|_{B(0,R)}\right\rangle-\left\langle f^h,\pi^h|_{B(0,R)}\right\rangle^2\leq\frac{1}{\kappa_R}\left\langle\Gamma^h(f^h,f^h),\pi^h|_{B(0,R)}\right\rangle,
\end{equation*}
and the constant $\kappa_R$ is independent of $h$.

\section*{Acknowledgment}
This work was partially supported by the National Key R\&D Program of China, Project Number 2021YFA1002800.
The work of L. Li was partially supported by NSFC 12371400 and Shanghai Municipal Science and Technology Major Project 2021SHZDZX0102.
The work of J.-G.L was partially supported by NSF grant DMS-2106988.

\bibliographystyle{plain}
\bibliography{ref}

\end{document}